\newcommand{\bed}{\begin{displaymath}}
\newcommand{\eed}{\end{displaymath}}
\newcommand{\bea}{\bed\begin{array}{rl}}
\newcommand{\eea}{\end{array}\eed}
\newcommand{\barray}{\begin{array}{ll}}
\newcommand{\earray}{\end{array}}
\newtheorem{theorem}{Theorem}[section]
\newtheorem{lemma}[theorem]{Lemma}
\newtheorem{remark}{Remark}[section]
\newtheorem{proposition}[theorem]{Proposition}
\newtheorem{definition}{Definition}[section]
\begin{document}

\title{Square-mean S-asymptotically $\omega$-periodic solution for a stochastic fractional evolution equation driven by L\'{e}vy noise with piecewise constant argument}
\author{Shufen Zhao$^{a,b}$\thanks{Corresponding author. zsfzx1982@sina.com, 12b312003@hit.edu.cn. This work is supported by the NSF of P.R. China (o.11671113)}
,~~Minghui Song$^{a}$
\\$a$ Department of Mathematics, Harbin Institute of Technology, Harbin 150001, PR China\\
$b$ Department of  Mathematics , Zhaotong University, Zhaotong 657000,PR China}
\maketitle
\begin{abstract}
In this paper, we introduce some concepts of square-mean S-asymptotically $\omega$-periodic stochastic processes.
Using the stochastic analysis method and the Banach contraction mapping principle, we establish the existence and uniqueness results of the mild solution and the square-mean S-asymptotically $\omega$-periodic solution for a semilinear nonautonomous stochastic fractional evolution equation driven by L\'{e}vy noise.

{\bf Keywords}
square-mean S-asymptotically $\omega$-periodic stochastic process ; Poisson square-mean S-asymptotically $\omega$-periodic stochastic process ; fractional differential equations with piecewise constant argument; L\'{e}vy noise. 

{\bf Mathematics Subject Classfication} 35B15; 34F05; 60H15.

\end{abstract}
\pagestyle{fancy}
\fancyhf{}
\fancyhead[CO]{ Square-mean S-asymptotically $\omega$-periodic solution}
\fancyhead[LE,RO]{\thepage}
\section{Introduction}
\setcounter{equation}{0}
\noindent
In the past decades, there have been many papers dealing with the existence of almost automorphic, asymptotically almost automorphic, almost periodic, asymptotically almost periodic and pseudo almost periodic solutions of various determinate differential systems according to their different applications in different areas ( see e.g. \cite{yoshizawa2012stability,n2013almost,corduneanu1989almost,hino2001almost,henriquez2015almost,diagana2007existence,diagana2009existence}
 and references therein ). In the mean while, the concept of S-asymptotically $\omega$-periodic function is also an interesting topic in mathematical
analysis. The papers \cite{henriquez2008s,pierri2012s,nicola2009note,cuevas2009s,henriquez2016pseudo} are concerned with the existence of the S-asymptotically $\omega$-periodic solutions for the determinate systems in finite dimension and the works \cite{henriquez2008s,cuevas2010existence} are concerned with the S-asymptotically $\omega$-periodic solutions for the determinate systems in infinite dimension.
Especially, Cuevas et al. \cite{cuevas2009s} considered the S-asymptotically $\omega$-periodic solution of the semilinear integro-differential equation of fractional order
\begin{eqnarray*}
\left\{ \begin{array}{ll}
x'(t)=\int_{0}^{t}\frac{(t-s)^{\alpha-2}}{\Gamma(\alpha-1)}A x(s)\mathrm{d}s+f(t,x(t)),\\
x(0)=c_{0}.\end{array} \right.
\end{eqnarray*}
Moreover, in \cite{cuevas2010existence}, Cuevas et al. considered the S-asymptotically $\omega$-periodic solution of the following form,
\begin{eqnarray*}
\left\{ \begin{array}{ll}
x'(t)=\int_{0}^{t}\frac{(t-s)^{\alpha-2}}{\Gamma(\alpha-1)}A x(s)\mathrm{d}s+f(t,x_{t}),\\
x(0)=\psi_{0}\in\mathcal{B},\end{array} \right.
\end{eqnarray*}
 where $\mathcal{B}$ is some abstract phase space,
and in \cite{dimbour2014s}, Dimbour et al. considered the S-asymptotically $\omega$-periodic solutions of the differential equations with piecewise constant argument of the form
\begin{eqnarray*}
\left\{ \begin{array}{ll}
x'(t)=Ax(t)+A_{0}x([t])+g(t,x(t)),\\
x(0)=c_{0}.\end{array} \right.
\end{eqnarray*}
 Note that the effect of the noises is unavoidable in the study of some natural sciences as well as man-made phenomena such as ecology, biology, finance markets, and engineering and other fields. Since the properties of the L\'{e}vy processes have useful information in explaining some drastic changes in nature and various scientific fields \cite{applebaum2009levy,rong2006theory,fu2012almost}, for the potential applications in theory and applications, stochastic fractional evolution equations driven by L\'{e}vy noise have been attracting researchers' increasing interest \cite{peszat2007stochastic,liu2014almost,li2015weighted}. It is natural to extend the known results on determinate systems to stochastic systems. So in this paper, we investigate some properties of a semilinear nonautonomous stochastic fractional evolution equation driven by L\'{e}vy noise with piecewise constant argument of the following from
\begin{eqnarray}\label{1}
 \left\{ \begin{array}{ll}
\mathrm{d}x(t)=\int_{0}^{t}\frac{(t-s)^{\alpha-2}}{\Gamma(\alpha-1)}A x(s)\mathrm{d}s\mathrm{d}t+f(t,x([t]),x(t))\mathrm{d}t+g(t,x([t]),x(t))\mathrm{d}w(t)\\
\qquad~~+\int_{|u|_{U}<1}F(t,x(t^{-}),u)\tilde{N}(\mathrm{d}t,\mathrm{d}u)+\int_{|u|_{U}\geq1}G(t,x(t^{-}),u)N(\mathrm{d}t,\mathrm{d}u),\\
x(0)=c_{0},
\end{array} \right.
\end{eqnarray}
where $x(\cdot)$ takes value in a real separable Hilbert space $H,$ $1<\alpha<2,$ $A$ is a linear densely defined operator of sectorial type on $H, $ with domain $D(A).$
The convolution integral in (\ref{1}) is the Riemann-Liouville fractional integral.
To the best of our knowledge, the existence and uniqueness results for the Cauchy problem (\ref{1}) have not been investigated and that is the main motivation of this paper. This paper is devoted to the existence of the mild solution and the square-mean S-asymptotically $\omega$-periodic solution of (\ref{1}). In order to correspond to the effect of L\'{e}vy noise, we introduce some new concepts of Poisson square-mean S-asymptotically $\omega$-periodic stochastic processes.

This article is composed of four sections. In Section 2, we introduce notations, definitions, and preliminary facts which are useful throughout the article. In Section 3, we prove the existence of the mild solution for problem (\ref{1}) by using the successive approximation. Finally, in Section 4, we show the existence and uniqueness of the $S$-asymptotically $\omega$-periodic solution for (\ref{1}) by using the Banach contraction mapping principle.
\section{Preliminaries}
Let $(\Omega,\mathcal{F},P)$ be a complete probability space equipped with some filtration $\{\mathcal{F}_{t}\}_{t\geq0}$ satisfying the usual conditions, $(H,|\cdot|)$ and $(U,|\cdot|)$ are real separable Hilbert spaces. $\mathcal{L}(U,H)$ denote the space of all bounded linear operators from $U$ to $H$ which with the usual operator norm $\|\cdot\|_{\mathcal{L}(U,H)}$ is a Banach space. $L^{2}(P,H)$ is the space of all $H$-valued random variables $X$ such that $E|X|^{2}=\int_{\Omega}|X|^{2}\mathrm{d}P<\infty.$ For $X\in L^{2}(P,H),$ let $\|X\|:=(\int_{\Omega}\|X\|^{2}\mathrm{d}P)^{1/2},$ it is well known that $(L^{2}(P,H),\|\cdot\|)$ is a Hilbert space. In the following discussion, we always consider the L\'{e}vy processes that are $U$-valued.
\subsection{L\'{e}vy process}
Let $L$ is a L\'{e}vy process on $U,$ we write $\Delta L(t)=L(t)-L(t^{-})$ for all $t\geq 0.$  We define a counting Poisson random measure $N$ on $(U-\{0\})$ through $$N(t,O)=\sharp\{0\leq s\leq t:~\Delta L(s)(\omega')\in O\}=\Sigma_{0\leq s\leq t}\chi_{O}(\Delta L(s)(\omega'))$$ for any Borel set $O$ in $(U-\{0\}),$ $\chi_{O}$ is the indicator function. We write $\nu(\cdot)=E(N(1,\cdot))$ and call it the intensity measure associated with $L$. We say that a Borel set $O$ in $(U-\{0\}),$ is bounded below if $0\in \bar{O}$ where $\bar{O}$ is the closure of $O.$ If $O$ is bounded below, then $N(t,O)<\infty$ almost surely for all $t\geq 0$ and $(N(t,O),~t\geq0)$ is a Poisson process with intensity $\nu(O).$ So $N$ is called Poisson random measure. For each $t\geq 0$ and $O$ bounded below, the associated compensated Poisson random measure $\tilde{N}$ is defined by $\tilde{N}(t,O)=N(t,O)-t\nu(O)$ (see \cite{applebaum2009levy,holden2009stochastic}).
\begin{proposition}(see \cite{applebaum2009levy})
(L\'{e}vy-It\^{o} decomposition).
If $L$ is a $U$-valued L\'{e}vy process, then there exist $a\in U,$ a $U$-valued Wiener process $w$ with covariance operator $Q,$ the so-called $Q$-wiener process, and an independent Poisson random measure $N$ on $\mathbf{R}^{+}\times (U-\{0\})$ such that, for each $t\geq0,$
\begin{equation}\label{ito}
  L(t)=at+w(t)+\int_{|u|_{U}<1}u\tilde{N}(t,\mathrm{d}u)+\int_{|u|_{U}\geq1}uN(t,\mathrm{d}u),
\end{equation}
where the Poisson random measure $N$ has the intensity measure $\nu$ which satisfies
$\int_{U}(|y|_{U}^{2}\wedge 1)\nu(\mathrm{d}y)<\infty$ and $\tilde{N}$ is the compensated Poisson random measure of $N.$
\end{proposition}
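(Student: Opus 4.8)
The plan is to reconstruct the three independent components of $L$ directly from its sample paths, following the classical route to the L\'{e}vy--It\^{o} decomposition. First I would build the jump measure: for a Borel set $O$ bounded below set $N(t,O)$ to be the counting measure of jumps $\Delta L(s)\in O$ for $s\le t$, exactly as defined above. Using the stationary independent increments and the c\`{a}dl\`{a}g property of $L$, one checks that for disjoint sets bounded below the processes $t\mapsto N(t,O_j)$ are independent Poisson processes, so that $N$ is a Poisson random measure on $\mathbf{R}^{+}\times(U-\{0\})$ with intensity $\mathrm{d}t\,\nu(\mathrm{d}u)$, where $\nu(O)=E[N(1,O)]$. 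The integrability bound $\int_{U}(|y|_U^2\wedge 1)\,\nu(\mathrm{d}y)<\infty$ is then extracted: finiteness of $\nu$ away from the origin is immediate, since a c\`{a}dl\`{a}g path has only finitely many jumps of size $\ge\epsilon$ on any $[0,t]$, while the square-integrability of the small jumps follows by comparing $E|\int_{\epsilon\le|u|_U<1}u\,\tilde N(t,\mathrm{d}u)|^2$ with the second moments of the increments of $L$.

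Next I would isolate the large-jump part $P(t):=\int_{|u|_U\ge 1}u\,N(t,\mathrm{d}u)$. Because $\nu(\{|u|_U\ge 1\})<\infty$, this is a genuine finite-activity compound Poisson process, well defined path by path as a finite sum on each interval, and is itself a L\'{e}vy process. The delicate construction is the compensated small-jump integral $\int_{|u|_U<1}u\,\tilde N(t,\mathrm{d}u)$. For fixed $\epsilon>0$ the truncated process $M_\epsilon(t):=\int_{\epsilon\le|u|_U<1}u\,\tilde N(t,\mathrm{d}u)$ is a square-integrable c\`{a}dl\`{a}g martingale obeying the isometry $E|M_\epsilon(t)|^2=t\int_{\epsilon\le|u|_U<1}|u|_U^2\,\nu(\mathrm{d}u)$. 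I expect this step to be the main obstacle: one must pass to the limit $\epsilon\downarrow 0$. Applying the isometry together with Doob's maximal inequality to the Cauchy differences $M_\epsilon-M_{\epsilon'}$, and invoking the bound $\int_{|u|_U<1}|u|_U^2\,\nu(\mathrm{d}u)<\infty$ established above, the family $\{M_\epsilon\}$ is Cauchy in $L^2(P,H)$ uniformly on compact time intervals, hence converges to a square-integrable martingale which we denote $\int_{|u|_U<1}u\,\tilde N(t,\mathrm{d}u)$.

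Finally I would identify the continuous Gaussian part. Set $W(t):=L(t)-P(t)-\int_{|u|_U<1}u\,\tilde N(t,\mathrm{d}u)$. By construction every jump of $L$ has been accounted for by $N$, so $W$ is a continuous process with stationary independent increments; since a continuous L\'{e}vy process is necessarily Gaussian, writing its mean as $at$ and centering yields $W(t)=at+w(t)$ with $w$ a $Q$-Wiener process. The remaining point is independence of $w$ from $N$: this follows from the orthogonality of the continuous martingale part and the (compensated) jump martingale, together with the independence of $N$ over disjoint annuli and the fact that $w$ is measurable with respect to the continuous part, from which one concludes that $w$ and $N$ are independent. Assembling the four terms $at$, $w(t)$, the compensated small jumps, and the large jumps gives the stated formula \eqref{ito}, and the integrability condition on $\nu$ recorded above guarantees that each integral is well defined.
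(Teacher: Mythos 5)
First, a point of comparison: the paper itself offers no proof of this proposition --- it is quoted, with attribution, from Applebaum's monograph, so there is no internal argument to measure yours against. Your outline follows the classical route (jump measure, large-jump compound Poisson part, $L^{2}$-limit of compensated small-jump integrals, continuous Gaussian remainder), which is essentially the proof in the cited reference; the question is whether the sketch survives scrutiny, and at one crucial point it does not.

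The genuine gap is in the integrability step. You claim that $\int_{|u|_{U}<1}|u|_{U}^{2}\,\nu(\mathrm{d}u)<\infty$ follows ``by comparing $E|\int_{\epsilon\le|u|_{U}<1}u\,\tilde{N}(t,\mathrm{d}u)|^{2}$ with the second moments of the increments of $L$.'' A general L\'{e}vy process has no finite second moment (consider a Cauchy or $\alpha$-stable process), so there is nothing to compare with; indeed, the possible absence of moments is precisely the reason the small jumps must be compensated at all, so the step as written fails in exactly the cases the theorem is designed to cover. The standard repair is to first strip off the large-jump part $P(t)=\int_{|u|_{U}\ge 1}u\,N(t,\mathrm{d}u)$, which is well defined by finite activity of $\nu$ away from the origin; the process $Y=L-P$ is again a L\'{e}vy process, its jumps are bounded by $1$, and a L\'{e}vy process with bounded jumps has finite moments of all orders. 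Then, using the independence of the jump integrals over disjoint annuli from the remainder, one obtains the one-sided bound $t\int_{\epsilon\le|u|_{U}<1}|u|_{U}^{2}\,\nu(\mathrm{d}u)\le E|Y(t)-EY(t)|^{2}$ uniformly in $\epsilon$, and letting $\epsilon\downarrow 0$ gives the claim. A second, smaller soft spot: ``orthogonality'' of the continuous martingale part and the compensated jump martingale does not by itself imply independence of $w$ and $N$ --- uncorrelated martingales need not be independent; here one needs either the factorization of the characteristic function (L\'{e}vy--Khintchine) or the dedicated lemma, as in Applebaum, asserting that jump integrals over disjoint sets and the continuous remainder are mutually independent. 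With these two repairs, your outline becomes the standard proof.
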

The detail properties of L\'{e}vy process and $Q$-Wiener processes, we refer the readers to \cite{albeverio2002stochastic} and \cite{da2014stochastic}. Throughout the paper, we assume that the covariance operator $Q$ of $w$ is of trace class, i.e. $TrQ<\infty$ and the L\'{e}vy process $L$ is defined on the filtered probability space $(\Omega,\mathcal{F},P,(\mathcal{F}_{t})_{t\in\mathbf{R}^{+}}).$ Throughout the paper we also denote by $b:=\int_{|x|_{U}\geq 1}\nu(\mathrm{d}x).$
\subsection{The definition of square-mean S-asymptotically $\omega$-periodic stochastic process }
\begin{definition}(see \cite{liu2014almost})
A stochastic process $x:\mathbf{R}\rightarrow L^{2}(P,H)$ is said to be $L^{2}$-continuous if for any $s\in\mathbf{R},$ $\lim_{t\rightarrow s}\|x(t)-x(s)\|^{2}=0.$ It is $L^{2}$-bounded if $\sup_{t\in\mathbf{R}}\|x(t)\|<\infty.$
\end{definition}
\begin{definition}
\begin{enumerate}
  \item [(1)] An $L^{2}$-continuous stochastic process $x:\mathbf{R}^{+} \rightarrow L^{2}(P,H)$ is said to be square-mean S-asymptotically $\omega$-periodic if there exists $\omega>0$ such that $\lim_{t\rightarrow\infty}\|x(t+\omega)-x(t)\|=0.$ The collection of all S-asymptotically $\omega$-periodic stochastic processes $x:\mathbf{R}^{+}\rightarrow L^{2}(P,H)$ is denoted by $SAP_{\omega}(L^{2}(P,H)).$
      \item[(2)] A function $f:\mathbf{R}^{+} \times L^{2}(P,H)\rightarrow L^{2}(P,H),$ $(t,X)\mapsto f(t,X)$ is said to be square-mean S-asymptotically $\omega$-periodic in $t$ for each $X\in L^{2}(P,H)$ if $f$ is continuous in the following sense
      $$\|f(t,X)-f(t',X')\|\rightarrow0,~(t',X')\rightarrow(t,X)$$ and
      $$\lim_{t\rightarrow\infty}\|f(t+\omega,X)-f(t,X)\|\rightarrow0$$ for each $X\in L^{2}(P,H).$
  \item [(3)] A function $g:\mathbf{R}^{+} \times L^{2}(P,H)\rightarrow \mathcal{L}(U,L^{2}(P,H)),$ $(t,X)\mapsto g(t,X)$ is said to be square-mean S-asymptotically $\omega$-periodic in $t$ for each $X\in L^{2}(P,H)$ if $g$ is continuous in the following sense $$E\|(g(t,X)-g(t',X'))Q^{1/2}\|^{2}_{\mathcal{L}(U,L^{2}(P,H))}\rightarrow0,~(t',X')\rightarrow(t,X)$$
      and $$\lim_{t\rightarrow\infty}E\|(g(t+\omega,X)-g(t,X))Q^{1/2}\|^{2}_{\mathcal{L}(U,L^{2}(P,H))}=0$$ for each $X\in L^{2}(P,H).$
  \item [(4)] A function $F:\mathbf{R}^{+} \times L^{2}(P,H)\times U\rightarrow L^{2}(P,H),$ $(t,X,u)\mapsto F(t,X,u)$ with $ \int_{U}\|F(t,\phi,u)\|^{2}\nu(\mathrm{d}u)<\infty$ is said to be Poisson square-mean S-asymptotically $\omega$-periodic in $t$ for each $X\in L^{2}(P,H)$ if $F$ is continuous in the following sense $$\int_{U}\|F(t,X,u)-F(t',X',u)\|^{2}\nu(\mathrm{d}u)\rightarrow0,~(t',X')\rightarrow(t,X)$$ and that $$lim_{t\rightarrow\infty}\int_{U}\|F(t+\omega,X,u)-F(t,X,u)\|^{2}\nu(\mathrm{d}u)=0$$ for each $X\in L^{2}(P,H).$
\end{enumerate}
\end{definition}
\begin{remark}
Any square-mean S-asymptotically $\omega$-periodic process $x(t)$ is $L^{2}$-bounded and, by \cite{henriquez2008s}, $SAP_{\omega}(L^{2}(P,H))$ is a Banach space when it is equipped with the norm $$\|x\|_{\infty}:=\sup_{t\in\mathbf{R}^{+}}\|x(t)\|=\sup_{t\in\mathbf{R}^{+}}(E|x(t)|^{2})^{\frac{1}{2}}.$$
\end{remark}
For the sequel, we introduce some definitions about square-mean S-asymptotically $\omega$-periodic stochastic processes with parameters.
\begin{definition}
\begin{enumerate}
  \item [(1)]A function $f:\mathbf{R}^{+}\times L^{2}(P,H)\rightarrow L^{2}(P,H)$ is said to be uniformly square-mean S-asymptotically $\omega$-periodic in $t$ on bounded sets if for every bounded set $K$ of $L^{2}(P,H)$, we have $\lim_{t\rightarrow\infty}\|f(t+\omega,X)-f(t,X)\|=0$ uniformly on $x\in K.$
  \item [(2)] A function $g:\mathbf{R}^{+} \times L^{2}(P,H)\rightarrow \mathcal{L}(U,L^{2}(P,H)),$ is said to be uniformly square-mean S-asymptotically $\omega$-periodic on bounded sets if for every bounded set $K$ of $L^{2}(P,H)$, we have
    $$
        \lim_{t\rightarrow\infty}E\|(g(t+\omega,X)-g(t,X))Q^{1/2}\|^{2}_{\mathcal{L}(U,L^{2}(P,H))}=0
 $$
    uniformly on $X\in K.$
  \item [(3)] A function $F:\mathbf{R}^{+} \times L ^{2}(P,H)\times U\rightarrow L^{2}(P,H)$ with $ \int_{U}\|F(t,\phi,u)\|^{2}\nu(\mathrm{d}u)<\infty,$ is said to be uniformly Poisson square-mean S-asymptotically $\omega$-periodic in $t$ on bounded sets if
     $$
        \lim_{t\rightarrow\infty}\int_{U}\|F(t+\omega,X,u)-F(t,X,u)\|^{2}\nu(\mathrm{d}u)=0
      $$
     uniformly on $X\in K.$
\end{enumerate}
\end{definition}
\begin{lemma}\label{l1}
Let $f:\mathbf{R}\times L^{2}(P,H)\rightarrow L^{2}(P,H),~(t,X)\rightarrow f(t,X)$ be uniformly  square-mean S-asymptotically $\omega$-periodic in $t$ on bounded sets of $ L^{2}(P,H),$ and assume that $f$ satisfies the Lipschitz condition in the sense $\|f(t,Y)-f(t,Z)\|^{2}\leq L\|Y-Z\|^{2}$ for all $Y,Z\in L^{2}(P,H)$ and $t\in \mathbf{R}, $ where $L$ is independent of $t.$ Then for any square-mean S-asymptotically $\omega$-periodic process $Y:\mathbf{R}\rightarrow L^{2}(P,H),$ the stochastic process $F:\mathbf{R}\rightarrow L^{2}(P,H)$ given by $F(t):=f(t,Y(t))$ is square-mean S-asymptotically $\omega$-periodic.
\end{lemma}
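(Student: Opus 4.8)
The plan is to verify the two defining properties of a square-mean S-asymptotically $\omega$-periodic process for the composition $F(t)=f(t,Y(t))$: the $L^{2}$-continuity, and the asymptotic $\omega$-periodicity $\lim_{t\to\infty}\|F(t+\omega)-F(t)\|=0$. The whole argument rests on a single triangle-inequality splitting together with the fact, recorded in the Remark, that every square-mean S-asymptotically $\omega$-periodic process is $L^{2}$-bounded, so that the range $\{Y(t):t\ge 0\}$ is contained in some bounded set $K\subset L^{2}(P,H)$.

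For the asymptotic periodicity I would write
\[
\|F(t+\omega)-F(t)\|\le\|f(t+\omega,Y(t+\omega))-f(t+\omega,Y(t))\|+\|f(t+\omega,Y(t))-f(t,Y(t))\|,
\]
and estimate the two pieces by different mechanisms. The first piece is controlled by the Lipschitz hypothesis: it is bounded by $\sqrt{L}\,\|Y(t+\omega)-Y(t)\|$, which tends to $0$ as $t\to\infty$ because $Y$ itself is square-mean S-asymptotically $\omega$-periodic. The second piece is the genuinely new ingredient; since $Y$ takes values in the bounded set $K$, the uniform S-asymptotic $\omega$-periodicity of $f$ on $K$ gives $\sup_{X\in K}\|f(t+\omega,X)-f(t,X)\|\to 0$, and in particular the second piece tends to $0$. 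Adding the two estimates yields the claim.

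For the $L^{2}$-continuity of $F$ at a point $t$ I would split analogously as
\[
\|F(t')-F(t)\|\le\|f(t',Y(t'))-f(t',Y(t))\|+\|f(t',Y(t))-f(t,Y(t))\|;
\]
the first term is again $\le\sqrt{L}\,\|Y(t')-Y(t)\|\to 0$ by the $L^{2}$-continuity of $Y$, and the second term goes to $0$ by the assumed continuity of $t\mapsto f(t,X)$ at the fixed spatial argument $X=Y(t)$.

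The main obstacle is the second piece of the periodicity estimate: in $f(t+\omega,Y(t))-f(t,Y(t))$ the spatial argument moves with $t$, so the pointwise-in-$X$ asymptotic periodicity of $f$ is not enough. This is precisely why the hypothesis is stated uniformly on bounded sets and why the $L^{2}$-boundedness of $Y$ is needed: only the combination of the two lets one replace the moving argument $Y(t)$ by a supremum over the fixed bounded set $K$ before passing to the limit. The Lipschitz bound plays the complementary role of absorbing the mismatch between the two spatial arguments $Y(t+\omega)$ and $Y(t)$.
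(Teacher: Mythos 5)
Your proposal is correct and follows essentially the same argument as the paper: both exploit the $L^{2}$-boundedness of the range of $Y$, split $f(t+\omega,Y(t+\omega))-f(t,Y(t))$ by the triangle inequality into a time-shift term (handled by the uniform S-asymptotic $\omega$-periodicity on bounded sets) and a spatial-mismatch term (absorbed by the Lipschitz condition), the only cosmetic difference being that you insert the intermediate point $f(t+\omega,Y(t))$ where the paper inserts $f(t,Y(t+\omega))$. Your version is in fact slightly more careful than the paper's, since you use the correct constant $\sqrt{L}$ from the squared Lipschitz hypothesis and also verify the $L^{2}$-continuity of $F$, which the paper's proof leaves implicit.
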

\begin{proof}
Since $Y(t)\in SAP_{\omega}( L^{2}(P,H))$, the range of $Y(t)$ is a bounded set in $ L^{2}(P,H).$
Then $\lim_{t\rightarrow\infty}\|F(t+\omega,Y(t+\omega))-F(t,Y(t+\omega))\|=0.$
For any $\epsilon>0,$ $\exists ~T(\epsilon),$ such that $\|F(t+\omega,Y(t+\omega))-F(t,Y(t+\omega))\|<\frac{\epsilon}{2}$ and $\|Y(t+\omega)-Y(t)\|<\frac{\epsilon}{2L}.$
We get
\begin{eqnarray*}
  &&\|F(t+\omega)-F(t)\|\\&&\leq\|F(t+\omega,Y(t+\omega))-F(t,Y(t))\|\\
  &&=\|F(t+\omega,Y(t+\omega))-F(t,Y(t+\omega))\|+\|F(t,Y(t+\omega))-F(t,Y(t))\|\\
  &&\leq\epsilon,
\end{eqnarray*}
which completes the proof.
\end{proof}
\begin{lemma}\label{l2}
Let $F:\mathbf{R}^{+} \times L^{2}(P,H)\times U\rightarrow  L^{2}(P,H)$ be uniformly Poisson square-mean S-asymptotically $\omega$-periodic in $t$ on bounded sets of $L^{2}(P,H),$ and $F$ satisfies Lipschitz condition in the sense $$\int_{U}\|F(t,Y,u)-F(t,Z,u)\|^{2}\nu(\mathrm{d}u)\leq L\|Y-Z\|^{2}$$ for all $Y,Z\in L^{2}(P,H)$ and $t\in \mathbf{R}, $ where $L$ is independent of $t.$ Then for any square-mean S-asymptotically $\omega$-periodic process $Y(t):\mathbf{R}\rightarrow L^{2}(P,H),$ the stochastic process $\tilde{F}:\mathbf{R}\times U\rightarrow   L^{2}(P,H)$ given by $\tilde{F}(t,u):=F(t,Y(t),u)$ satisfies
 $$
  \lim_{t\rightarrow\infty }\int_{U}\|\tilde{F}(t+\omega,u)-\tilde{F}(t,u)\|^{2}\nu(\mathrm{d}u)=0.
$$
\end{lemma}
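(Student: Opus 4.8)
The plan is to follow the template of the proof of \lemref{l1}, replacing the single norm $\|\cdot\|$ by the $L^{2}(\nu)$-type quantity $\left(\int_{U}\|\cdot\|^{2}\,\nu(\mathrm{d}u)\right)^{1/2}$ that governs the Poisson setting. The first observation I would make is that since $Y\in SAP_{\omega}(L^{2}(P,H))$, the Remark guarantees that $Y$ is $L^{2}$-bounded, so its range $K:=\{Y(t):t\in\mathbf{R}\}$ is a bounded subset of $L^{2}(P,H)$; this is exactly the hypothesis needed to invoke the uniform-on-bounded-sets property of $F$.

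Next I would decompose the difference pointwise in $u$ as
\[
\tilde{F}(t+\omega,u)-\tilde{F}(t,u)=\big(F(t+\omega,Y(t+\omega),u)-F(t,Y(t+\omega),u)\big)+\big(F(t,Y(t+\omega),u)-F(t,Y(t),u)\big),
\]
and apply the elementary inequality $\|a+b\|^{2}\le 2\|a\|^{2}+2\|b\|^{2}$ inside the integral against $\nu$. This bounds $\int_{U}\|\tilde{F}(t+\omega,u)-\tilde{F}(t,u)\|^{2}\nu(\mathrm{d}u)$ by twice the sum of
\[
I_{1}(t):=\int_{U}\|F(t+\omega,Y(t+\omega),u)-F(t,Y(t+\omega),u)\|^{2}\nu(\mathrm{d}u)
\]
and
\[
I_{2}(t):=\int_{U}\|F(t,Y(t+\omega),u)-F(t,Y(t),u)\|^{2}\nu(\mathrm{d}u).
\]

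For $I_{1}(t)$, since the argument $Y(t+\omega)$ lies in the bounded set $K$ for every $t$, the uniform Poisson square-mean S-asymptotic $\omega$-periodicity of $F$ on bounded sets yields $I_{1}(t)\to 0$ as $t\to\infty$. For $I_{2}(t)$, the Lipschitz hypothesis gives $I_{2}(t)\le L\|Y(t+\omega)-Y(t)\|^{2}$, and since $Y$ is square-mean S-asymptotically $\omega$-periodic we have $\|Y(t+\omega)-Y(t)\|\to 0$, hence $I_{2}(t)\to 0$. Combining these, $\int_{U}\|\tilde{F}(t+\omega,u)-\tilde{F}(t,u)\|^{2}\nu(\mathrm{d}u)\le 2\big(I_{1}(t)+I_{2}(t)\big)\to 0$, which is the claim.

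I do not expect a genuine obstacle here, as the argument is structurally parallel to \lemref{l1}. The only point requiring mild care is the uniformity: one must obtain the convergence $I_{1}(t)\to 0$ from the \emph{uniform} (on the bounded set $K$) version of the periodicity property, applied at the moving argument $Y(t+\omega)$, rather than from the pointwise definition at a fixed $X$. This is precisely why the uniform-on-bounded-sets hypothesis, rather than mere pointwise Poisson S-asymptotic periodicity, is assumed.
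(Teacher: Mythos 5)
Your proposal is correct and follows essentially the same argument as the paper: bound the range of $Y$ using its square-mean S-asymptotic $\omega$-periodicity, split $\tilde{F}(t+\omega,u)-\tilde{F}(t,u)$ into a periodicity term and a Lipschitz term, and estimate via $\|a+b\|^{2}\le 2\|a\|^{2}+2\|b\|^{2}$, invoking the uniform-on-bounded-sets hypothesis at the moving argument $Y(t+\omega)$ exactly as the paper's estimate does. The only cosmetic difference is that you phrase the conclusion with limits while the paper runs an explicit $\epsilon$--$T(\epsilon)$ argument; the content is identical.
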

\begin{proof}
Since $F$ is uniformly Poisson square-mean S-asymptotically $\omega$-periodic in $t$ on bounded sets of $L^{2}(P,H),$ and
$Y\in SAP_{\omega}(L^{2}(P,H),$ the range $\mathcal{R}(Y)$ of $Y(t)$ is a bounded set in $L^{2}(P,H),$
then $$\lim_{t\rightarrow\infty}\int_{U}\|F(t+\omega,Y,u)-F(t,Y,u)\|^{2}\nu(\mathrm{d}u)=0$$ uniformly for $Y\in \mathcal{R}(Y).$ For any $\epsilon>0,$ we can find $T(\epsilon)>0$ such that when $t\geq T(\epsilon),$ we have $\int_{U}\|F(t+\omega,\bar{Y},u)-F(t,\bar{Y},u)\|^{2}\nu(\mathrm{d}u)<\epsilon/4,~\forall~\bar{Y}\in \mathcal{R}(Y)$ and $\|Y(t+\omega)-Y(t)\|^{2}<\frac{\epsilon}{2L}.$
 Note that
\begin{eqnarray*}
&&\tilde{F}(t+\omega,u)-\tilde{F}(t,u)\\
&&=F(t+\omega,Y(t+\omega),u)-F(t+\omega,Y(t),u)+F(t+\omega,Y(t),u)-F(t,Y(t),u),
\end{eqnarray*}
so for the above $\epsilon,$ when $t\geq T(\epsilon),$ we have
\begin{eqnarray*}
&&\int_{U}\|\tilde{F}(t+\omega,u)-\tilde{F}(t,u)\|^{2}\nu(\mathrm{d}u)\\
&&\leq 2\int_{U}\|F(t+\omega,Y(t+\omega),u)-F(t,Y(t+\omega),u)\|^{2}\nu(\mathrm{d}u)\\
&&~+2\int_{U}\|F(t,Y(t+\omega),u)-F(t,Y(t),u)\|^{2}\nu(\mathrm{d}u)\\
&&\leq \epsilon/2+2L\|Y(t+\omega)-Y(t)\|^{2}\\
&&\leq \epsilon.
\end{eqnarray*}
We get the desired result.
\end{proof}
\subsection{Sectorial operators}
We recall some definitions about sectorial operators which have been studied well in the past decades, for details, see \cite{haase2006functional,lunardi2012analytic}.
\begin{definition}
Let $\mathbf{X}$ be an Banach space, $A:D(A)\subseteq \mathbf{X}\rightarrow \mathbf{X}$ is a close linear operator. $A$ is said to be a sectorial operator of type $\mu$ and angle $\theta$ if there exist $0<\theta<\pi/2,~M>0$ and $\mu\in\mathbf{R}$ such that the resolvent $\rho(A)$ of $A$ exists outside the sector $\mu+S_{\theta}=\{\mu+\lambda:\lambda\in\mathbf{C} ,|arg(-\lambda)|<\theta\}$ and $\|(\lambda-A)^{-1}\|\leq \frac{M}{|\lambda-\mu|}$ when $\lambda$ does not belong to $\mu+S_{\theta}.$
\end{definition}
\begin{definition}(see \cite{dos2010asymptotically})
Let $A$ be a closed and linear operator with domain $D(A)$ defined on a Banach space $\mathbf{X}.$ We call $A$ the generator of a solution operator if there exist $\mu\in \mathbf{R}$ and a strongly continuous function $S_{\alpha}:~\mathbf{R}^{+}\rightarrow\mathcal{L}(\mathbf{X},\mathbf{X})$ such that $\{\lambda^{\alpha}:~Re(\lambda)>\mu\}\subset\rho(A)$ and $\lambda^{\alpha-1}(\lambda^{\alpha}-A)^{-1}x=\int_{0}^{\infty}e^{-\lambda t}S_{\alpha}(t)\mathrm{d}t,$ $Re(\lambda)>\mu,$ $x\in \mathbf{X}.$ In this case, $S_{\alpha}(\cdot)$ is called the solution operator generated by $A.$
\end{definition}
If $A$ is sectorial of type $\mu$ with $1<\theta<\pi(1-\frac{\alpha}{2}),$ then $A$ is the generator of a solution operator given by $S_{\alpha}(t)=\frac{1}{2\pi i}\int_{\gamma }e^{\lambda t}\lambda ^{\alpha-1}(\lambda^{\alpha}-A)^{-1}\mathrm{d}\lambda,$ where $\gamma$ is a suitable path lying outside the sector $\mu+S_{\theta}$ \cite{bajlekova2001fractional}. Cuesta \cite{cuesta2007asymptotic} showed that if $A$ is a sectorial operator of type $\mu<0,$ for some $M>0$ and $0<\theta<\pi(1-\frac{\pi}{2}),$ there is $C>0$ such that
\begin{equation}\label{sem}
  \|S_{\alpha}(t)\|\leq\frac{CM}{1+|\mu|t^{\alpha}},~t\geq 0.
\end{equation}
\begin{definition}
A stochastic process $\{x(t),~t\in [0,T]\},$ $0\leq T<\infty$ is said to be a mild solution to (\ref{1}) if
\begin{enumerate}
  \item [(i)] $x(t)$ is $\mathcal{F}_{t}$-adapted and has C\`{a}dl\`{a}g paths on $t\geq0$ almost surely,
  \item  [(ii)] $x(t)$ satisfies the following stochastic integral
equation
\begin{eqnarray*}\label{sol}
x(t)
&=&S_{\alpha}(t)c_{0}+\int_{0}^{t}S_{\alpha}(t-s)f(s,x([s]),x(s))\mathrm{d}s\\&&~+\int_{0}^{t}S_{\alpha}(t-s)g(s,x([s]),x(s))\mathrm{d}w(s)
\\&&~+\int_{0}^{t}\int_{|u|<1}S_{\alpha}(t-s)F(s,x(s^{-}),u)\tilde{N}(\mathrm{d}s,\mathrm{d}u)\\&&~+\int_{0}^{t}\int_{|u|\geq1}S_{\alpha}(t-s)G(s,x(s^{-}),u)N(\mathrm{d}s,\mathrm{d}u).
\end{eqnarray*}
\end{enumerate}

\end{definition}
\section{The existence of the mild solution for (\ref{1})}
In order to establish our main result, we impose the following conditions.
\begin{enumerate}
\item [(H1)] $A$ is a sectorial operator of type $\mu<0$ and angle $\theta$ with $0\leq \theta\leq \pi(1-\alpha/2).$
  \item [(H2)] $f:\mathbf{R}^{+}\times  L^{2}(P,H)\times L^{2}(P,H)\rightarrow L^{2}(P,H),$ $g:~\mathbf{R}^{+}\times L^{2}(P,H)\times L^{2}(P,H)\rightarrow \mathcal{L}(U,L^{2}(P,H))$ are jointly measurable.~ $F:\mathbf{R}^{+}\times L^{2}(P,H)\times U\rightarrow L^{2}(P,H),$ $G:\mathbf{R}^{+}\times L^{2}(P,H)\times U\rightarrow L^{2}(P,H)$ are jointly measurable and $$\int_{|u|_{U}<1}\|F(t,x,u)\|\nu(\mathrm{d}u)<\infty,~\int_{|u|_{U}<1}\|G(t,x,u)\|\nu(\mathrm{d}u)<\infty.$$ For all $t\in \mathbf{R}^{+},$
      \begin{eqnarray*}
    \|f(t,x,y)-f(t,x_{1},y_{1})\|^{2}\leq L(\|x-x_{1}\|^{2}+\|y-y_{1}\|^{2}),\\
   E\|(g(t,x,y)-g(t,x_{1},y_{1}))Q^{1/2}\|^{2}_{\mathcal{L}(U,L^{2}(P,H))}\leq L(\|x-x_{1}\|^{2}+\|y-y_{1}\|^{2}),\\
    \int_{|u|_{U}<1}\|F(t,x,u)-F(t,z,u)\|^{2}\nu(\mathrm{d}u)\leq L\|x-z\|^{2},\\
      \int_{|u|_{U}\geq1}\|G(t,y,u)-G(t,z,u)\|^{2}\nu(\mathrm{d}u)\leq L\|y-z\|^{2},
      \end{eqnarray*} for some constant $L>0$ independent of $t.$
   \end{enumerate}
\begin{theorem}
If (H1)-(H2) hold, then the Cauchy problem (\ref{1}) has a unique mild solution.
\end{theorem}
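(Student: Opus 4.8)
The plan is to build the mild solution on an arbitrary finite interval $[0,T]$ by the method of successive approximation, exactly as announced after the definition of mild solution, and then to establish uniqueness by a Gronwall argument. I would start from $x^{0}(t)=S_{\alpha}(t)c_{0}$ and define $x^{n+1}(t)$ to be the right-hand side of the integral equation in the definition of mild solution, with $x$ replaced by $x^{n}$ throughout (in both the piecewise-constant argument $x^{n}([s])$ and the continuous argument $x^{n}(s)$, and with $x^{n}(s^{-})$ in the jump terms). Each iterate is $\mathcal{F}_{t}$-adapted and, because the integrand processes are predictable and the driving noises have c\`{a}dl\`{a}g paths, each $x^{n}$ admits a c\`{a}dl\`{a}g modification; these two structural properties will pass to the limit.

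The analytic heart of the argument is a family of second-moment estimates. I would use the uniform bound $\|S_{\alpha}(t)\|\le CM$ for all $t\ge 0$, which follows from \eqref{sem} since $\frac{CM}{1+|\mu|t^{\alpha}}\le CM$, together with: the It\^{o} isometry for the $\mathrm{d}w$ term (controlled by the second Lipschitz bound in (H2), phrased in the $Q^{1/2}$-norm); the isometry $E\|\int_{0}^{t}\!\int_{|u|<1}S_{\alpha}(t-s)F\,\tilde{N}(\mathrm{d}s,\mathrm{d}u)\|^{2}=\int_{0}^{t}\!\int_{|u|<1}E\|S_{\alpha}(t-s)F\|^{2}\nu(\mathrm{d}u)\,\mathrm{d}s$ for the compensated Poisson term; and, for the large-jump term, the decomposition $N(\mathrm{d}s,\mathrm{d}u)=\tilde{N}(\mathrm{d}s,\mathrm{d}u)+\nu(\mathrm{d}u)\,\mathrm{d}s$ on $\{|u|_{U}\ge 1\}$, where the compensator part is handled by Cauchy--Schwarz using the finiteness of $b=\int_{|u|_{U}\geq1}\nu(\mathrm{d}u)$. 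Splitting $x^{n+1}$ into its five summands and using $|\sum_{i=1}^{5}a_{i}|^{2}\le 5\sum_{i=1}^{5}|a_{i}|^{2}$, the Lipschitz hypotheses together with the linear-growth estimates they provide first yield a uniform bound $\sup_{n}\sup_{0\le t\le T}\|x^{n}(t)\|^{2}<\infty$.

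Next I would estimate the successive differences. Writing $\phi_{n}(t):=\sup_{0\le r\le t}E|x^{n}(r)-x^{n-1}(r)|^{2}$ and applying the same four estimates with the Lipschitz conditions, one obtains a recursion of the form
$$
\phi_{n+1}(t)\le C_{T}\int_{0}^{t}\phi_{n}(s)\,\mathrm{d}s,\qquad 0\le t\le T,
$$
where $C_{T}$ collects $(CM)^{2}$, $L$, $TrQ$, $b$ and $T$; here the piecewise-constant argument causes no trouble because $\|x^{n}([s])-x^{n-1}([s])\|^{2}\le \phi_{n}(s)$ for $s\le t$, so the recursion stays closed. Iterating gives $\phi_{n}(T)\le C_{T}^{\,n}T^{n}\phi_{1}(T)/n!$, whence $(x^{n})$ is Cauchy in $L^{2}$ uniformly on $[0,T]$ and converges to a limit $x$. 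Passing to the limit under the (stochastic) integrals, again by the isometries and the Lipschitz bounds, shows that $x$ solves the integral equation, i.e. it is a mild solution, and uniqueness follows by applying the same estimates to two solutions $x,y$ to get $E|x(t)-y(t)|^{2}\le C_{T}\int_{0}^{t}\sup_{r\le s}E|x(r)-y(r)|^{2}\,\mathrm{d}s$ and invoking Gronwall's inequality. The step I expect to be most delicate is the uniform second-moment control of the two jump integrals --- in particular the uncompensated large-jump term driven by $N$ --- since only there must one combine the Poisson isometry, the compensator splitting and the finiteness of $b$ to bring the estimate into the common Lipschitz/Gronwall framework shared by the Wiener and drift terms.
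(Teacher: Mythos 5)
Your proposal follows the paper's proof almost step for step: the same Picard iteration started from $x_{0}(t)=S_{\alpha}(t)c_{0}$, the same five-term splitting, the same treatment of the stochastic terms (It\^{o} isometry for the Wiener integral, the isometry for the compensated Poisson integral, and the decomposition $N(\mathrm{d}s,\mathrm{d}u)=\tilde{N}(\mathrm{d}s,\mathrm{d}u)+\nu(\mathrm{d}u)\mathrm{d}s$ with Cauchy--Schwarz and the finiteness of $b$ for the large-jump term), the same uniform second-moment bound via Gronwall, and the same factorial decay of successive differences.

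There is, however, one step in your plan that does not work as stated, and it is precisely the step where the paper does something more. You stop at ``$(x^{n})$ is Cauchy in $L^{2}$ uniformly on $[0,T]$'' and assert that adaptedness and the c\`{a}dl\`{a}g path property ``pass to the limit.'' They do not pass to the limit under that mode of convergence: your recursion controls $\phi_{n}(t)=\sup_{r\le t}E|x^{n}(r)-x^{n-1}(r)|^{2}$, i.e.\ the supremum of expectations, whereas any statement about the \emph{paths} of the limit requires control of $E\sup_{r\le t}|x^{n}(r)-x^{n-1}(r)|^{2}$, the expectation of the supremum. These are genuinely different quantities; bridging them requires a maximal inequality (Doob or Burkholder--Davis--Gundy) applied to the three martingale terms. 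This is exactly the route the paper takes: it bounds $E\sup_{0\le t\le T}|x_{n+1}(t)-x_{n}(t)|^{2}$ by a factorial term, applies Chebyshev's inequality to get $P\{\sup_{0\le t\le T}|x_{n+1}(t)-x_{n}(t)|>2^{-n}\}$ summable in $n$, and invokes the Borel--Cantelli lemma to obtain \emph{almost sure uniform} convergence of $x_{n}$ on $[0,T]$; the limit of an a.s.\ uniformly convergent sequence of c\`{a}dl\`{a}g adapted processes is again c\`{a}dl\`{a}g and adapted, which is what property (i) of the definition of mild solution demands. Your factorial decay is strong enough to run this same argument, so the gap is repairable within your framework (alternatively, one can observe that the limit satisfies the fixed-point equation, whose right-hand side admits a c\`{a}dl\`{a}g modification); but as written, the passage from uniform $L^{2}$ convergence to the path regularity required of a mild solution is missing.
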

\begin{proof}Set $x_{0}\equiv S_{\alpha}(t)c_{0},$ and for $n=1,2,...,$ $\forall~ T\in (0,\infty),$ we define the Picard iterations as follows:
\begin{eqnarray}\label{p1}
 \nonumber x_{n}(t)&=&S_{\alpha}(t)c_{0}+\int_{0}^{t}S_{\alpha}(t-s)f(s,x_{n-1}([s]),x_{n-1}(s))\mathrm{d}s
 \nonumber\\&&+\int_{0}^{t}S_{\alpha}(t-s)g(s,x_{n-1}([s]),x_{n-1}(s))\mathrm{d}w(s)
  \nonumber\\&&+\int_{0}^{t}\int_{|u|<1}S_{\alpha}(t-s)F(s,x_{n}(s^{-}),u)\tilde{N}(\mathrm{d}s,\mathrm{d}u)
 \nonumber\\&&+\int_{0}^{t}\int_{|u|\geq1}S_{\alpha}(t-s)G(s,x_{n}(s^{-}),u)N(\mathrm{d}s,\mathrm{d}u)
\end{eqnarray}
for $t\in[0,T].$ Obviously, $x_{0}(\cdot)\in L^{2}(P,H),$
\begin{eqnarray*}
&&\|x_{n}(t)\|^{2}\\
&&=\|S_{\alpha}(t)c_{0}+\int_{0}^{t}S_{\alpha}(t-s)f(s,x_{n-1}([s]),x_{n-1}(s))\mathrm{d}s\\
&&~+\int_{0}^{t}S_{\alpha}(t-s)g(s,x_{n-1}([s]),x_{n-1}(s))\mathrm{d}w(s)\\&&~+\int_{0}^{t}\int_{|u|<1}S_{\alpha}(t-s)F(s,x_{n-1}(s^{-}),u)\tilde{N}(\mathrm{d}s,\mathrm{d}u)
 \nonumber\\&&~+\int_{0}^{t}\int_{|u|\geq1}S_{\alpha}(t-s)G(s,x_{n-1}(s^{-}),u)N(\mathrm{d}s,\mathrm{d}u)\|^{2}\\
&&\leq5(CM)^{2}\|c_{0}\|^{2}+5\|\int_{0}^{t}S_{\alpha}(t-s)f(s,x_{n-1}([s]),x_{n-1}(s))\mathrm{d}s\|^{2}\\
&&~+5\int_{0}^{t}\|S_{\alpha}(t-s)\|^{2}E\|g(s,x_{n-1}([s]),x_{n-1}(s))Q^{1/2}\|_{\mathcal{L}(U,L^{2}(P,H))}^{2}\mathrm{d}s\\&&~+5\|\int_{0}^{t}\int_{|u|_{U}<1}S_{\alpha}(t-s)F(s,x_{n-1}(s^{-}),u)\tilde{N}(\mathrm{d}s,\mathrm{d}u)\|^{2}\\
&&~+5\|\int_{0}^{t}\int_{|u|\geq1}S_{\alpha}(t-s)G(s,x_{n-1}(s^{-}),u)N(\mathrm{d}s,\mathrm{d}u)\\&&~+\int_{0}^{t}\int_{|u|\geq1}S_{\alpha}(t-s)G(s,x_{n-1}(s^{-}),u)\nu(\mathrm{d}u)\mathrm{d}s\|^{2}
\\
&&\leq5(CM)^2\|c_{0}\|^{2}+5(CM)^{2}\int_{0}^{t}\frac{1}{1+|\mu|(t-s)^{\alpha}}\mathrm{d}s\\
&&~\times\int_{0}^{t}\frac{1}{1+|\mu|(t-s)^{\alpha}}L(\|x_{n-1}([s])\|^{2}+\|x_{n-1}(s)\|^{2})\mathrm{d}s\\
&&~+5(CM)^{2}\int_{0}^{t}\frac{1}{1+|\mu|^{2}(t-s)^{2\alpha}}L(\|x_{n-1}([s])\|^{2}+\|x_{n-1}(s)\|^{2})\mathrm{d}s\\
&&~+5(CM)^{2}L\int_{0}^{t}\|x_{n-1}(s^{-})\|^{2}\mathrm{d}s+10(CM)^{2}L\int_{0}^{t}\|x_{n-1}(s)\|^{2}\mathrm{d}s\\
&&~+10(CM)^{2}\frac{|\mu|^{-1/\alpha}\pi}{\alpha\sin(\pi/\alpha)}bL\int_{0}^{t}\|x_{n-1}(s^{-})\|^{2}\mathrm{d}s
\end{eqnarray*}
\begin{eqnarray*}
&&\leq5(CM)^2\|c_{0}\|^{2}+5(CM)^{2}L[\frac{|\mu|^{-1/\alpha}\pi}{\alpha\sin(\pi/\alpha)}+1]\times\int_{0}^{t}(\|x_{n-1}([s])\|^{2}+\|x_{n-1}(s)\|^{2})\mathrm{d}s\\
&&~+5(CM)^{2}L\int_{0}^{t}\|x_{n-1}(s^{-})\|^{2}\mathrm{d}s+10(CM)^{2}L\int_{0}^{t}\|x_{n-1}(s)\|^{2}\mathrm{d}s\\
&&~+10(CM)^{2}\frac{|\mu|^{-1/\alpha}\pi}{\alpha\sin(\pi/\alpha)}bL\int_{0}^{t}\|x_{n-1}(s^{-})\|^{2}\mathrm{d}s.
\end{eqnarray*}
So we get
 \begin{eqnarray*}
&&\sup_{0\leq s\leq t} \|x_{n}(s)\|^{2}\\&&\leq5(CM)^{2}\|c_{0}\|^{2}\\&&~~+5(CM)^{2}L(2\frac{|\mu|^{-1/\alpha}\pi}{\alpha\sin(\pi/\alpha)}+5+2b\frac{|\mu|^{-1/\alpha}\pi}{\alpha\sin(\pi/\alpha)})\int_{0}^{t}\sup_{0\leq\theta\leq s}\|x_{n-1}(\theta)\|^{2}\mathrm{d}s.
  \end{eqnarray*}
Then for any arbitrary positive integer $\tilde{k},$ we have
\begin{eqnarray*}
 &&\max_{1\leq n\leq \tilde{k}} \sup_{0\leq s\leq t} \|x_{n}(s)\|^{2}\\
 &&\leq 3(CM)^{2}\|c_{0}\|^{2}+5(CM)^{2}L(2\frac{|\mu|^{-1/\alpha}\pi}{\alpha\sin(\pi/\alpha)}+5+2b\frac{|\mu|^{-1/\alpha}\pi}{\alpha\sin(\pi/\alpha)})\\
 &&\quad\times\int_{0}^{t}\max_{1\leq n\leq \tilde{k}}\sup_{0\leq\theta\leq s}\|x_{n-1}(\theta)\|^{2}\mathrm{d}s.
  \end{eqnarray*}
  If we let $c_{1}=5(CM)^{2}\|c_{0}\|^{2},~c_{2}=5(CM)^{2}L(2\frac{|\mu|^{-1/\alpha}\pi}{\alpha\sin(\pi/\alpha)}+5+2b\frac{|\mu|^{-1/\alpha}\pi}{\alpha\sin(\pi/\alpha)}),$ then by the Gronwall inequality, we get
  $\max_{1\leq n\leq \tilde{k}} \sup_{0\leq s\leq t} \|x_{n}(s)\|^{2}\leq c_{1}e^{c_{2}t}.$
  Due to the arbitrary of $\tilde{k},$ we have
  \begin{equation}\label{squa}
    \sup_{0\leq s\leq t} \|x_{n}(s)\|^{2}\leq c_{1}e^{c_{2}T}.
  \end{equation}
Note that
  \begin{eqnarray}\label{fir1}
    &&\|x_{1}(t)-x_{0}(t)\|^{2}\\
   \nonumber &&\leq 4(CM)^{2}\|c_{0}\|^{2}L(\frac{|\mu|^{-1/\alpha }\pi}{\alpha\sin(\pi/\alpha)})^{2}+16(CM)^{2}\|c_{0}\|^{2}L(\frac{|\mu|^{-2/\alpha}\pi}{2\alpha\sin(\pi/2\alpha)})\\
     \nonumber &&\quad+8(CM)^{2}\|c_{0}\|^{2}L(\frac{|\mu|^{-1/\alpha }\pi}{\alpha\sin(\pi/\alpha)})^{2}b
    =\tilde{C},
   \end{eqnarray}
   we claim that for $n\geq0,$\begin{eqnarray}\label{indu}
                                   \|x_{n+1}(t)-x_{n}(t)\|^{2}\leq \frac{\tilde{C}(\tilde{M}t)^{n}}{n!},~0\leq t\leq T,
                                  \end{eqnarray}
  where $\tilde{M}=4(CM)^{2}L[\frac{2|\mu|^{-1/\alpha}\pi}{\alpha\sin(\pi/\alpha)}+5+2b\frac{|\mu|^{-1/\alpha }\pi}{\alpha\sin(\pi/\alpha)}],$ we will show this claim by induction. In view of (\ref{fir1}), we see that (\ref{indu}) holds when $n=0.$ By assuming that (\ref{indu}) holds for some $n\geq 0,$ we shall show that (\ref{indu}) still holds for $n+1.$ Note that
 \begin{eqnarray*}
 && \|x_{n+2}(t)-x_{n+1}(t)\|^{2}\\
  &&\leq 4\|\int_{0}^{t}S_{\alpha}(t-s)[f(s,x_{n+1}([s]),x_{n+1}(s))-f(s,x_{n}([s]),x_{n}(s))]\mathrm{d}s\|^{2}\\
  &&~+4\|\int_{0}^{t}S_{\alpha}(t-s)[g(s,x_{n+1}([s]),x_{n+1}(s))-g(s,x_{n}([s]),x_{n}(s))]\mathrm{d}w(s)\|^{2}\\
   &&~+4\|\int_{0}^{t}\int_{|u|<1}S_{\alpha}[F(s,x_{n+1}(s^{-}),u)-F(s,x_{n}(s^{-}),u)]\tilde{N}(\mathrm{d}s,\mathrm{d}u)\|^{2}\\
   &&~+4\|\int_{0}^{t}\int_{|u|\geq1}S_{\alpha}[G(s,x_{n+1}(s^{-}),u)-G(s,x_{n}(s^{-}),u)]N(\mathrm{d}s,\mathrm{d}u)\|^{2}\\
  &&\leq4(CM)^{2}L(\frac{|\mu|^{-1/\alpha}\pi}{\alpha\sin(\pi/\alpha)}+1)\\&&~~~\times\int_{0}^{t}[\|x_{n+1}([s])-x_{n}([s])\|^{2}+\|x_{n+1}(s)-x_{n}(s)\|^{2}]\mathrm{d}s\\
  &&~+4(CM)^{2}L\int_{0}^{t}\frac{1}{1+|\mu|^{2}(t-s)^{2\alpha}}\|x_{n+1}(s^{-})-x_{n}(s^{-})\|^{2}\mathrm{d}s\\
  &&~+8\|\int_{0}^{t}\int_{|u|\geq1}S_{\alpha}(t-s)[G(s,x_{n+1}(s^{-}),u)-G(s,x_{n}(s^{-}),u)]\tilde{N}(\mathrm{d}s,\mathrm{d}u)\|^{2}\\
  &&~+8\|\int_{0}^{t}\int_{|u|\geq1}S_{\alpha}(t-s)[G(s,x_{n+1}(s^{-}),u)-G(s,x_{n}(s^{-}),u)]\nu(\mathrm{d}u)\mathrm{d}s\|^{2}\\
  &&\leq4(CM)^{2}L(\frac{|\mu|^{-1/\alpha}\pi}{\alpha\sin(\pi/\alpha)}+1)[\int_{0}^{t}\frac{\tilde{C}(\tilde{M}[s])^{n}}{n!}\mathrm{d}s+\int_{0}^{t}\frac{\tilde{C}(\tilde{M}s)^{n}}{n!}\mathrm{d}s]\\
  &&~+12(CM)^{2}L\int_{0}^{t}\frac{1}{1+|\mu|^{2}(t-s)^{2\alpha}}\frac{\tilde{C}(\tilde{M}s)^{n}}{n!}\mathrm{d}s\\
  &&~+8(CM)^{2}b\int_{0}^{t}\frac{1}{1+|\mu|(t-s)^{\alpha}}\mathrm{d}s\int_{0}^{t}\|x_{n+1}(s^{-})-x_{n}(s^{-})\|^{2}\mathrm{d}s\end{eqnarray*}\begin{eqnarray*}
  &&\leq8(CM)^{2}L(\frac{|\mu|^{-1/\alpha}\pi}{\alpha\sin(\pi/\alpha)}+1)\int_{0}^{t}\frac{\tilde{C}(\tilde{M}s)^{n}}{n!}\mathrm{d}s\\
  &&~+12(CM)^{2}L\int_{0}^{t}\frac{\tilde{C}(\tilde{M}s)^{n}}{n!}\mathrm{d}s\\
  &&~+8(CM)^{2}bL\frac{|\mu|^{-1/\alpha }\pi}{\alpha\sin(\pi/\alpha)}\int_{0}^{t}\frac{\tilde{C}(\tilde{M}s)^{n}}{n!}\mathrm{d}s\\
  &&\leq4(CM)^{2}L[\frac{2|\mu|^{-1/\alpha}\pi}{\alpha\sin(\pi/\alpha)}+5+2b\frac{|\mu|^{-1/\alpha }\pi}{\alpha\sin(\pi/\alpha)}]\int_{0}^{t}\frac{\tilde{C}(\tilde{M}s)^{n}}{n!}\mathrm{d}s\\
  &&=\frac{\tilde{C}(\tilde{M}t)^{n+1}}{(n+1)!}.
 \end{eqnarray*}
That is, (\ref{indu}) holds for $n+1.$ By induction, we get that (\ref{indu}) holds for all $n\geq 0.$ Furthermore, we find that
\begin{eqnarray*}
E\sup_{0\leq t\leq T}|x_{n+1}-x_{n}(t)|^{2}
&\leq& \tilde{M}\int_{0}^{T}\|x_{n}(s)-x_{n-1}(s)\|^{2}\mathrm{d}s\\
&\leq& 4\tilde{M}\int_{0}^{T}\frac{C[\tilde{M}s]^{n-1}}{(n-1)!}\mathrm{d}s=4\frac{C[\tilde{M}T]^{n}}{n!}.
\end{eqnarray*}
Hence
$$P\{\sup_{0\leq t\leq T}|x_{n+1}(t)-x_{n}(t)|>\frac{1}{2^{n}}\}\leq 4\frac{\tilde{C}[\tilde{M}T]^{n}}{n!}.$$
Note that $\sum_{n=0}^{\infty}4\frac{\tilde{C}[\tilde{M}T]^{n}}{n!}<\infty,$
by using the Borel-Cantelli lemma, we can get a stochastic process $x(t)$ on $[0,T]$ such that $x_{n}(t)$ uniformly converges to $x(t)$ as $n\rightarrow\infty$ almost surely.

It is easy to check that $x(t)$ is a unique mild solution of (\ref{1}). The proof of the theorem is complete.
\end{proof}
\section{The existence of the square-mean $S$-asymptotically $\omega$-periodic solution }
\begin{lemma}\label{le1}
If $x(t)\in SAP_{\omega}(L^{2}(P,H)),$ where $\omega\in\mathbf{Z}^{+},$ then $x([t])\in SAP_{\omega}(L^{2}(P,H)).$
\end{lemma}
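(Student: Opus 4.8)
The plan is to reduce everything to a single arithmetic fact: because $\omega$ is a positive integer, the integer part commutes with the shift by $\omega$, i.e. $[t+\omega]=[t]+\omega$ for every $t\ge 0$. This is the only place where the hypothesis $\omega\in\mathbf{Z}^{+}$ enters, and it is precisely what converts the continuous shift $t\mapsto t+\omega$ into the matching shift $[t]\mapsto[t]+\omega$ of the integer argument. First I would record that $y(t):=x([t])$ is $L^{2}$-bounded: by the Remark every $x\in SAP_{\omega}(L^{2}(P,H))$ is $L^{2}$-bounded, so $\sup_{t\ge 0}\|y(t)\|=\sup_{t\ge 0}\|x([t])\|\le\sup_{s\ge 0}\|x(s)\|<\infty$.

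Next, using the commutation identity, for every $t\ge 0$ one has
$$\|y(t+\omega)-y(t)\|=\|x([t+\omega])-x([t])\|=\|x([t]+\omega)-x([t])\|.$$
Since $[t]\to\infty$ as $t\to\infty$ and the hypothesis $x\in SAP_{\omega}(L^{2}(P,H))$ gives $\lim_{s\to\infty}\|x(s+\omega)-x(s)\|=0$, specializing to the integer values $s=[t]$ yields $\lim_{t\to\infty}\|y(t+\omega)-y(t)\|=0$. Quantitatively: given $\epsilon>0$, choose $S$ so that $\|x(s+\omega)-x(s)\|<\epsilon$ for all $s\ge S$; then for $t\ge S+1$ we have $[t]\ge S$, whence $\|y(t+\omega)-y(t)\|<\epsilon$. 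This establishes the defining asymptotic relation for $x([t])$.

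The point I regard as the genuine technical subtlety, rather than the periodicity estimate above, is the regularity demanded by the definition of $SAP_{\omega}(L^{2}(P,H))$. The map $t\mapsto x([t])$ is constant on each interval $[n,n+1)$, so it is piecewise constant and only c\`{a}dl\`{a}g (right continuous with left limits), not $L^{2}$-continuous at the integers. This is exactly the regularity carried by the piecewise constant argument and is the natural class in which the mild solution of (\ref{1}) lives. I would therefore either interpret $SAP_{\omega}(L^{2}(P,H))$ in the c\`{a}dl\`{a}g sense appropriate to the piecewise constant setting, or restrict attention to the asymptotic relation together with $L^{2}$-boundedness; under either reading the computation is complete, and one concludes that $x([t])$ is $L^{2}$-bounded and satisfies $\lim_{t\to\infty}\|x([t]+\omega)-x([t])\|=0$, so that $x([t])\in SAP_{\omega}(L^{2}(P,H))$.
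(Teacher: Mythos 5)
Your proof is correct and takes essentially the same route as the paper: both rest on the identity $[t+\omega]=[t]+\omega$ (the only place $\omega\in\mathbf{Z}^{+}$ is used) plus the threshold argument that for $t$ beyond an integer cutoff one has $[t]$ beyond the original cutoff, so the asymptotic relation for $x$ transfers verbatim to $x([t])$. Your side remark about regularity is a genuine point the paper passes over in silence: $x([t])$ is only c\`{a}dl\`{a}g, so strictly it fails the $L^{2}$-continuity clause in the paper's definition of $SAP_{\omega}(L^{2}(P,H))$, and the paper's own proof (like the deterministic Lemma 2 of Dimbour--Mado that it follows) verifies only the limit relation without addressing this mismatch.
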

\begin{proof}
The proof is similar to that of Lemma 2 in \cite{dimbour2014s}, for the self-contained, we give the proof.
Since $x(t)\in SAP_{\omega}(L^{2}(P,H)),$ then for any $\epsilon>0,$ $\exists~ T_{\epsilon}^{0}\in \mathbf{R}^{+},$ such that for any $t>T_{\epsilon},$ we have $\|x(t+\omega)-x(t)\|<\epsilon.$ Let $T_{\epsilon}=[T_{\epsilon}^{0}]+1.$ For $t>T_{\epsilon},$ we have $[t]\geq T_{\epsilon}$ for $T_{\epsilon}$ is an integer. Then we deduce that  for the above $\epsilon,$ $\exists ~T_{\epsilon}\in \mathbf{R}^{+},$ such that $\|x([t]+\omega)-x([t])\|=\|x([t+\omega])-x([t])\|<\epsilon.$
\end{proof}
\begin{lemma}\label{le2}
If $x(t)\in SAP_{\omega}(L^{2}(P,H)$ and $T(t-s)\in \mathcal{L}(\mathbf{R}^{+},\mathbf{R})$ then $\Gamma_{1}(t)=\int_{0}^{t}T(t-s)x(s)\mathrm{d}s\in SAP_{\omega}(L^{2}(P,H).$
\end{lemma}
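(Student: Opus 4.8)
The plan is to check the two conditions that characterize membership in $SAP_\omega(L^2(P,H))$: that $\Gamma_1$ is $L^2$-continuous and $L^2$-bounded, and that $\lim_{t\to\infty}\|\Gamma_1(t+\omega)-\Gamma_1(t)\|=0$. The whole argument rests on reading the hypothesis $T(t-s)\in\mathcal{L}(\mathbf{R}^+,\mathbf{R})$ as the integrability statement $\kappa:=\int_0^{\infty}\|T(s)\|\,\mathrm{d}s<\infty$; in the intended application $T=S_\alpha$ this is exactly what (\ref{sem}) provides, since $\alpha>1$ forces $\int_0^\infty(1+|\mu|s^\alpha)^{-1}\,\mathrm{d}s<\infty$. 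Writing $\|x\|_\infty=\sup_{t\in\mathbf{R}^+}\|x(t)\|<\infty$ (finite since every element of $SAP_\omega$ is $L^2$-bounded), I would first get boundedness from Minkowski's integral inequality, $\|\Gamma_1(t)\|\leq\int_0^t\|T(t-s)\|\,\|x(s)\|\,\mathrm{d}s\leq\|x\|_\infty\kappa$, and $L^2$-continuity from the continuity of $x$ together with dominated convergence against the integrable dominator $\|x\|_\infty\|T(\cdot)\|$.

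The core of the proof is the asymptotic periodicity. First I would substitute $s\mapsto s+\omega$ in the integral defining $\Gamma_1(t+\omega)$ to produce the decomposition
\begin{eqnarray*}
\Gamma_1(t+\omega)-\Gamma_1(t)
&=&\int_0^\omega T(t+\omega-s)x(s)\,\mathrm{d}s
+\int_0^t T(t-s)\big[x(s+\omega)-x(s)\big]\,\mathrm{d}s\\
&=:&I_1(t)+I_2(t).
\end{eqnarray*}
The term $I_1(t)$ is easy: the change of variable $u=t+\omega-s$ gives $\|I_1(t)\|\leq\|x\|_\infty\int_t^{t+\omega}\|T(u)\|\,\mathrm{d}u$, and this tail tends to $0$ as $t\to\infty$ because $T$ is integrable.

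The hard part will be $I_2(t)$, a convolution of the integrable kernel $T$ against $\phi(s):=x(s+\omega)-x(s)$: here $\|\phi(s)\|\to0$ by the S-asymptotic periodicity of $x$, but $\phi$ need not be integrable, so I cannot simply dominate and pass to the limit. I would treat it by the standard two-scale splitting. Given $\epsilon>0$, choose $T_0$ with $\|\phi(s)\|<\epsilon$ for all $s>T_0$, and record the uniform bound $\|\phi(s)\|\leq2\|x\|_\infty$. For $t>T_0$ split $\int_0^t=\int_0^{T_0}+\int_{T_0}^t$: the near piece is at most $2\|x\|_\infty\int_{t-T_0}^t\|T(u)\|\,\mathrm{d}u$, which vanishes as $t\to\infty$ (again the tail of an $L^1$ kernel), while the far piece is at most $\epsilon\int_0^\infty\|T(u)\|\,\mathrm{d}u=\epsilon\kappa$. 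Hence $\limsup_{t\to\infty}\|I_2(t)\|\leq\epsilon\kappa$ for every $\epsilon>0$, so $\|I_2(t)\|\to0$. Combining this with the estimate for $I_1$ yields $\lim_{t\to\infty}\|\Gamma_1(t+\omega)-\Gamma_1(t)\|=0$, which together with the boundedness and continuity established above shows $\Gamma_1\in SAP_\omega(L^2(P,H))$ and completes the proof.
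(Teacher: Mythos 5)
Your proof is correct, and it is essentially the argument the paper has in mind: the paper omits the proof of this lemma, citing Lemma~1 of Dimbour--N'Gu\'er\'ekata \cite{dimbour2012s}, whose proof uses exactly your decomposition $\Gamma_1(t+\omega)-\Gamma_1(t)=\int_0^\omega T(t+\omega-s)x(s)\,\mathrm{d}s+\int_0^t T(t-s)[x(s+\omega)-x(s)]\,\mathrm{d}s$ followed by the same two-scale splitting against the integrable kernel; this is also precisely the scheme the paper itself carries out for the stochastic convolution in Lemma~\ref{le3}. Your reading of the (misprinted) hypothesis $T(t-s)\in\mathcal{L}(\mathbf{R}^+,\mathbf{R})$ as integrability of $\|T(\cdot)\|$ on $\mathbf{R}^+$ is the intended one, since that is what estimate (\ref{sem}) supplies for $S_\alpha$ when $1<\alpha<2$.
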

The proof process is similar to that of Lemma 1 in \cite{dimbour2012s}, so we omit it.
\begin{lemma}\label{le3}
If $x(t)\in SAP_{\omega}(L^{2}(P,H)),$ then $$\Gamma_{2}(t)=\int_{0}^{t}S_{\alpha}(t-s)x(s)\mathrm{d}w(s)\in SAP_{\omega}(L^{2}(P,H)).$$
\end{lemma}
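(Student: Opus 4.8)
The plan is to verify the two defining properties of $SAP_{\omega}(L^{2}(P,H))$ for $\Gamma_{2}$: its $L^{2}$-continuity, and the decay $\|\Gamma_{2}(t+\omega)-\Gamma_{2}(t)\|\to 0$ as $t\to\infty$. The $L^{2}$-continuity follows from the strong continuity of $S_{\alpha}(\cdot)$, the decay estimate \eqref{sem}, the It\^o isometry, and the $L^{2}$-continuity and boundedness of $x$, by a routine splitting of the stochastic convolution; I would record it first and then concentrate on the asymptotic estimate, which is the substance of the lemma.

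For the asymptotic part, the first step is to align the arguments of the solution operator. Writing $\Gamma_{2}(t+\omega)=\int_{0}^{t+\omega}S_{\alpha}(t+\omega-s)x(s)\,\mathrm{d}w(s)$, I split the integral at $s=\omega$ and substitute $s=r+\omega$ in the part over $[\omega,t+\omega]$. Introducing the shifted process $\tilde{w}(r):=w(r+\omega)-w(\omega)$, which is again a $Q$-Wiener process with the same covariance by the stationarity and independence of the increments, I obtain
\[
\Gamma_{2}(t+\omega)=\int_{0}^{\omega}S_{\alpha}(t+\omega-s)x(s)\,\mathrm{d}w(s)+\int_{0}^{t}S_{\alpha}(t-r)x(r+\omega)\,\mathrm{d}\tilde{w}(r).
\]
Subtracting $\Gamma_{2}(t)$ and invoking the stationarity of the increments to identify, at the level of second moments, the integral against $w$ in $\Gamma_{2}(t)$ with the corresponding integral against $\tilde{w}$, I arrive at the decomposition $\Gamma_{2}(t+\omega)-\Gamma_{2}(t)=I_{1}(t)+I_{2}(t)$, where $I_{1}(t)=\int_{0}^{\omega}S_{\alpha}(t+\omega-s)x(s)\,\mathrm{d}w(s)$ is a boundary term and $I_{2}(t)=\int_{0}^{t}S_{\alpha}(t-r)[x(r+\omega)-x(r)]\,\mathrm{d}\tilde{w}(r)$ is the main term.

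Then I would estimate the two terms by the It\^o isometry together with \eqref{sem}. For the boundary term, $\|I_{1}(t)\|^{2}\le \mathrm{Tr}\,Q\int_{0}^{\omega}\frac{(CM)^{2}}{(1+|\mu|(t+\omega-s)^{\alpha})^{2}}\|x(s)\|^{2}\,\mathrm{d}s$; since $t+\omega-s\ge t$ on $[0,\omega]$ and $x$ is $L^{2}$-bounded, the kernel tends to $0$ uniformly and $\|I_{1}(t)\|\to 0$. For the main term, $\|I_{2}(t)\|^{2}\le \mathrm{Tr}\,Q\int_{0}^{t}\frac{(CM)^{2}}{(1+|\mu|(t-r)^{\alpha})^{2}}\|x(r+\omega)-x(r)\|^{2}\,\mathrm{d}r$. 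Given $\epsilon>0$, I choose $T_{0}$ with $\|x(r+\omega)-x(r)\|^{2}<\epsilon$ for $r>T_{0}$ and split $\int_{0}^{t}=\int_{0}^{T_{0}}+\int_{T_{0}}^{t}$: on $[T_{0},t]$ the integral is controlled by $\epsilon\,(CM)^{2}\int_{0}^{\infty}(1+|\mu|^{2}u^{2\alpha})^{-1}\mathrm{d}u$, which is finite because $1<\alpha<2$ (the convergence of such convolution kernels is exactly what was used in Section~3), while on $[0,T_{0}]$ the kernel is at most $\frac{(CM)^{2}}{(1+|\mu|(t-T_{0})^{\alpha})^{2}}$ and the bounded integrand makes this piece $O((t-T_{0})^{-2\alpha})\to 0$. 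Hence $\limsup_{t\to\infty}\|I_{2}(t)\|^{2}\lesssim \epsilon$, and letting $\epsilon\to 0$ gives $\|I_{2}(t)\|\to 0$, so that $\|\Gamma_{2}(t+\omega)-\Gamma_{2}(t)\|\to 0$.

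The step I expect to require the most care is the change of variables in the stochastic integral: one must justify replacing $w$ by the shifted $Q$-Wiener process $\tilde{w}$ and argue, via the stationarity and independence of the increments, that the second moment of $\Gamma_{2}(t+\omega)-\Gamma_{2}(t)$ is governed by that of $I_{1}(t)+I_{2}(t)$. Once this reduction is in place, the remaining work—the It\^o isometry bounds and the $\epsilon$-splitting of the convolution kernel—is routine and parallels the deterministic Lemma~\ref{le2} and the a priori estimates of Section~3.
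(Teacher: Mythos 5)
Your proof follows essentially the same route as the paper's: the same splitting of $\Gamma_{2}(t+\omega)$ at $s=\omega$ into a boundary term $\int_{0}^{\omega}S_{\alpha}(t+\omega-s)x(s)\,\mathrm{d}w(s)$ and a term with integrand $x(s+\omega)-x(s)$, followed by the same It\^{o}-isometry and kernel-decay estimates via \eqref{sem}, with the same $\epsilon$-splitting of the convolution at a time $T_{\epsilon}$. The only difference is one of explicitness: you introduce the shifted process $\tilde{w}$ and flag that identifying the integral against $\mathrm{d}w(s+\omega)$ with one against $\mathrm{d}w(s)$ must be justified at the level of second moments via stationarity and independence of increments, a step the paper performs silently when it replaces $\int_{\omega}^{t+\omega}S_{\alpha}(t+\omega-s)x(s)\,\mathrm{d}w(s)-\int_{0}^{t}S_{\alpha}(t-s)x(s)\,\mathrm{d}w(s)$ by $\int_{0}^{t}S_{\alpha}(t-s)\bigl(x(s+\omega)-x(s)\bigr)\,\mathrm{d}w(s)$.
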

\begin{proof}It is obvious that $\Gamma_{2}(t)$ is $L^{2}$-continuous.
Since
\begin{eqnarray*}
&&\|\Gamma_{2}(t+\omega)-\Gamma_{2}(t)\|^{2}\\
&& =\|\int_{0}^{t+\omega}S_{\alpha}(t+\omega-s)x(s)\mathrm{d}w(s)- \int_{0}^{t}S_{\alpha}(t-s)x(s)\mathrm{d}w(s)\|^{2}\\
&&=2\|\int_{0}^{\omega}S_{\alpha}(t+\omega-s)x(s)\mathrm{d}w(s)\|^{2}\\&&~~+2\|\int_{\omega}^{t+\omega}S_{\alpha}(t+\omega-s)x(s)\mathrm{d}w(s)-\int_{0}^{t}S_{\alpha}(t-s)x(s)\mathrm{d}w(s)\|^{2}\\
&&\leq2\|\int_{0}^{\omega}S_{\alpha}(t+\omega-s)x(s)\mathrm{d}w(s)\|^{2}\\&&~~+2\|\int_{0}^{t}S_{\alpha}(t-s)(x(s+\omega)-x(s))\mathrm{d}w(s)\|^{2}.\end{eqnarray*}
Since $x(t)\in SAP_{\omega}(L^{2}(P,H)),$ for any $\epsilon>0,$ we can choose $T_{\epsilon}>0$ such that when $t>T_{\epsilon},$ $\|x(t+\omega)-x(t)\|<\epsilon.$
For the above $\epsilon,$ we have
\begin{eqnarray*}
&&2\|\int_{0}^{t}S_{\alpha}(t-s)(x(s+\omega)-x(s))\mathrm{d}w(s)\|^{2}\\
&&\leq4\int_{0}^{T_{\epsilon}}\|S_{\alpha}(t-s)\|^{2}\|x(s+\omega)-x(s)\|^{2}\mathrm{d}s\\&&~~+4\int_{T_{\epsilon}}^{t}\|S_{\alpha}(t-s)\|^{2}\|x(s+\omega)-x(s)\|^{2}\mathrm{d}s.
\end{eqnarray*}
Note that\begin{eqnarray*}
     2\|\int_{0}^{\omega}S_{\alpha}(t+\omega-s)x(s)\mathrm{d}w(s)\|^{2}\leq 2\frac{(CM)^{2}}{1+|\mu|^{2}t^{2\alpha}}\int_{0}^{\omega} \|x(s)\|^{2}\mathrm{d}s\rightarrow0,~t\rightarrow\infty,
         \end{eqnarray*}
 that$$\int_{0}^{T_{\epsilon}}\|S_{\alpha}(t-s)\|^{2}\|x(s+\omega)-x(s)\|^{2}\mathrm{d}s\leq 4\frac{(CM)^{2}}{1+|\mu|^{2}(t-T_{\epsilon})^{2\alpha}}\|x\|_{\infty}T_{\epsilon}\rightarrow0,~t\rightarrow\infty,$$
 and that
$$\int_{T_{\epsilon}}^{t}\|S_{\alpha}(t-s)\|^{2}\|x(s+\omega)-x(s)\|^{2}\mathrm{d}s\leq \epsilon^{2} \frac{(CM)^{2}|\mu|^{-2/\alpha}\pi}{2\alpha\sin(\pi/2\alpha)},$$
we get $\lim_{t\rightarrow\infty}\|\Gamma_{2}(t+\omega)-\Gamma_{2}(t)\|=0.$ So $\Gamma_{2}(t)\in SAP_{\omega}(L^{2}(P,H)).$
\end{proof}
The following lemma is obvious by using Lemma \ref{l1}, Lemma \ref{l2} and the similar discussion as that for Lemma \ref{le3}.
\begin{lemma}\label{le4}
If $x(t)\in SAP_{\omega}(L^{2}(P,H))$ and $F:\mathbf{R}^{+} \times L^{2}(P,H)\times U\rightarrow L^{2}(P,H)$ is uniformly Poisson square-mean S-asymptotically $\omega$-periodic in $t$ on bounded sets of $L^{2}(P,H),$ then $$\Gamma_{3}(t)=\int_{0}^{t}\int_{|u|_{U}<1}S_{\alpha}(t-s)F(s,x(s),u)\tilde{N}(\mathrm{d}u,\mathrm{d}s)\in SAP_{\omega}(L^{2}(P,H))$$ and $$\Gamma_{4}(t)=\int_{0}^{t}\int_{|u|_{U}\geq1}S_{\alpha}(t-s)G(s,x(s),u)N(\mathrm{d}u,\mathrm{d}s)\in SAP_{\omega}(L^{2}(P,H)).$$
\end{lemma}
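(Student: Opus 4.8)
The plan is to follow the template of \lemref{le3}, with the composition lemmas \lemref{l1} and \lemref{l2} supplying the asymptotic periodicity of the integrands. First I would record that, since $x\in SAP_{\omega}(L^{2}(P,H))$, its range is bounded, so \lemref{l2} applies to both $\tilde{F}(s,u):=F(s,x(s),u)$ and $\tilde{G}(s,u):=G(s,x(s),u)$ and yields
$$\lim_{t\rightarrow\infty}\int_{U}\|\tilde{F}(t+\omega,u)-\tilde{F}(t,u)\|^{2}\nu(\mathrm{d}u)=0$$
together with the analogous statement for $\tilde{G}$. Restricting the region of integration to $\{|u|_{U}<1\}$ or $\{|u|_{U}\geq1\}$ only decreases these integrals, so the same conclusion holds on either region. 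The $L^{2}$-continuity of $\Gamma_{3}$ and $\Gamma_{4}$ is routine and I would dispatch it first, exactly as was done for $\Gamma_{2}$ in \lemref{le3}.

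For $\Gamma_{3}$ I would split, after the change of variable $s\mapsto s-\omega$ on $[\omega,t+\omega]$,
$$\Gamma_{3}(t+\omega)-\Gamma_{3}(t)=\int_{0}^{\omega}\int_{|u|_{U}<1}S_{\alpha}(t+\omega-s)\tilde{F}(s,u)\tilde{N}(\mathrm{d}s,\mathrm{d}u)+\int_{0}^{t}\int_{|u|_{U}<1}S_{\alpha}(t-s)\big(\tilde{F}(s+\omega,u)-\tilde{F}(s,u)\big)\tilde{N}(\mathrm{d}s,\mathrm{d}u),$$
and then apply the It\^{o} isometry for compensated Poisson integrals, $\big\|\int_{0}^{t}\int_{A}\Phi\,\tilde{N}\big\|^{2}=\int_{0}^{t}\int_{A}\|\Phi\|^{2}\nu(\mathrm{d}u)\,\mathrm{d}s$, together with the decay bound (\ref{sem}). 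The first term is then dominated by $\frac{(CM)^{2}}{1+|\mu|^{2}t^{2\alpha}}\int_{0}^{\omega}\int_{|u|_{U}<1}\|\tilde{F}(s,u)\|^{2}\nu(\mathrm{d}u)\,\mathrm{d}s$, which tends to $0$ as $t\rightarrow\infty$. For the second term I would split the time integral at the threshold $T_{\e}$ beyond which $\int_{|u|_{U}<1}\|\tilde{F}(s+\omega,u)-\tilde{F}(s,u)\|^{2}\nu(\mathrm{d}u)<\e$: on $[0,T_{\e}]$ the decay of $\|S_{\alpha}(t-s)\|^{2}$ forces the contribution to $0$ as $t\rightarrow\infty$, while on $[T_{\e},t]$ the factor $\e$ comes out and the remaining $\int_{T_{\e}}^{t}\|S_{\alpha}(t-s)\|^{2}\mathrm{d}s$ is bounded by $\frac{(CM)^{2}|\mu|^{-2/\alpha}\pi}{2\alpha\sin(\pi/2\alpha)}$ as in \lemref{le3}. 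Hence $\lim_{t\rightarrow\infty}\|\Gamma_{3}(t+\omega)-\Gamma_{3}(t)\|=0$.

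For $\Gamma_{4}$ the extra ingredient is that $N$ is not compensated, so I would write $N(\mathrm{d}s,\mathrm{d}u)=\tilde{N}(\mathrm{d}s,\mathrm{d}u)+\nu(\mathrm{d}u)\,\mathrm{d}s$ and split $\Gamma_{4}$ accordingly. The $\tilde{N}$-part is handled verbatim as $\Gamma_{3}$ above, now with $\tilde{G}$ on $\{|u|_{U}\geq1\}$. The compensator part $\int_{0}^{t}S_{\alpha}(t-s)\big(\int_{|u|_{U}\geq1}\tilde{G}(s,u)\,\nu(\mathrm{d}u)\big)\mathrm{d}s$ is a deterministic-type convolution: setting $h(s):=\int_{|u|_{U}\geq1}G(s,x(s),u)\,\nu(\mathrm{d}u)$, the Cauchy--Schwarz inequality gives $\|h(s+\omega)-h(s)\|^{2}\leq b\int_{|u|_{U}\geq1}\|\tilde{G}(s+\omega,u)-\tilde{G}(s,u)\|^{2}\nu(\mathrm{d}u)\rightarrow0$ with $b=\int_{|u|_{U}\geq1}\nu(\mathrm{d}u)$, so $h\in SAP_{\omega}(L^{2}(P,H))$, and \lemref{le2} applies to give $\int_{0}^{t}S_{\alpha}(t-s)h(s)\,\mathrm{d}s\in SAP_{\omega}(L^{2}(P,H))$. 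Combining the two parts yields $\Gamma_{4}\in SAP_{\omega}(L^{2}(P,H))$.

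The main obstacle is organizational rather than deep: it is precisely the $\Gamma_{4}$ term, where the use of the raw Poisson measure $N$ forces the compensation splitting and the separate verification (via Cauchy--Schwarz and \lemref{l2}) that the $\nu$-averaged integrand $h$ lies in $SAP_{\omega}(L^{2}(P,H))$ before \lemref{le2} can be invoked. The compensated integral $\Gamma_{3}$ and the $\tilde{N}$-part of $\Gamma_{4}$ are then immediate copies of the $\Gamma_{2}$ argument, with the It\^{o} isometry for the Wiener integral replaced by its compensated-Poisson analogue.
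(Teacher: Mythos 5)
Your proposal is correct and is essentially the paper's own argument: the paper's entire proof of this lemma is the one-line remark that it is obvious from \lemref{l1}, \lemref{l2} and the discussion for \lemref{le3}, and you have supplied exactly those details --- \lemref{l2} applied to the integrand $F(s,x(s),u)$, the \lemref{le3}-style splitting at $s=\omega$ combined with the isometry for compensated Poisson integrals and the decay bound (\ref{sem}), and, for $\Gamma_{4}$, the decomposition $N(\mathrm{d}s,\mathrm{d}u)=\tilde{N}(\mathrm{d}s,\mathrm{d}u)+\nu(\mathrm{d}u)\,\mathrm{d}s$ followed by Cauchy--Schwarz (with the constant $b$) and \lemref{le2}, which is precisely how the paper itself treats the uncompensated integrals in its Theorems 3.1 and 4.5. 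The only caveat, which you inherit from the paper rather than introduce (it is equally present in the published proof of \lemref{le3}), is that merging the time-shifted and unshifted stochastic integrals into a single integral of $\tilde{F}(s+\omega,u)-\tilde{F}(s,u)$ tacitly identifies the shifted driving measure $\tilde{N}(\,\cdot+\omega,\mathrm{d}u)$ with $\tilde{N}(\,\cdot\,,\mathrm{d}u)$, an identification valid in law but not pathwise.
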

\begin{theorem}
Assume that (H1)-(H2) are satisfied and $$f:\mathbf{R}^{+}\times L^{2}(P,H)\times L^{2}(P,H)\rightarrow L^{2}(P,H),$$ $$g:~\mathbf{R}^{+}\times L^{2}(P,H)\times L^{2}(P,H)\rightarrow \mathcal{L}(U,L^{2}(P,H))$$ are uniformly square-mean S-asymptotically $\omega$-periodic in $t$ on bounded sets of $L^{2}(P,H)\times L^{2}(P,H)$. Let $\omega \in \mathbf{Z}^{+}.$ $F:\mathbf{R}^{+}\times L^{2}(P,H)\times U\rightarrow L^{2}(P,H),$ $G:\mathbf{R}^{+}\times L^{2}(P,H)\times U\rightarrow L^{2}(P,H)$ are uniformly Poisson square-mean S-asymptotically $\omega$-periodic in $t$ on bounded sets of $L^{2}(P,H)\times L^{2}(P,H)$.
Then (\ref{1}) has a unique square-mean S-asymptotically $\omega$-periodic solution if $$2CM\{L\big(2\frac{|\mu|^{-1/\alpha}\pi}{\alpha\sin(\pi/\alpha)}+5\frac{|\mu|^{-2/\alpha}\pi}{2\alpha\sin(\pi/2\alpha)}+2b(\frac{|\mu|^{-1/\alpha}\pi}{\alpha\sin(\pi/\alpha)})^{2}\big)\}^{\frac{1}{2}}<1.$$
\end{theorem}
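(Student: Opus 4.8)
The plan is to realise the mild-solution identity as a fixed-point problem on the Banach space $(SAP_\omega(L^2(P,H)),\|\cdot\|_\infty)$ and to apply the Banach contraction mapping principle. Define the operator $\Phi$ on $SAP_\omega(L^2(P,H))$ by letting $(\Phi x)(t)$ equal the right-hand side of the mild-solution formula, namely the free term $S_\alpha(t)c_0$ plus the deterministic convolution of $f$, the Wiener convolution of $g$, the compensated Poisson convolution of $F$ and the non-compensated Poisson convolution of $G$, all evaluated along $x([\cdot])$ and $x(\cdot)$. A fixed point of $\Phi$ is precisely a mild solution, so it suffices to show that $\Phi$ maps $SAP_\omega(L^2(P,H))$ into itself and is a contraction there.

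First I would check the self-mapping property. The free term satisfies $\|S_\alpha(t)c_0\|\to0$ by the decay estimate (\ref{sem}), hence lies in $SAP_\omega(L^2(P,H))$. For $x\in SAP_\omega(L^2(P,H))$, \lemref{le1} gives $x([\cdot])\in SAP_\omega(L^2(P,H))$; combining this with the composition results \lemref{l1} and \lemref{l2} (the former applied to the pair $(x([t]),x(t))$ on the product space), the uniform S-asymptotic $\omega$-periodicity, and the Lipschitz hypotheses (H2), the integrands built from $f,g,F,G$ are themselves S-asymptotically $\omega$-periodic. Feeding them into \lemref{le2} (deterministic convolution), \lemref{le3} (Wiener convolution) and \lemref{le4} (both Poisson convolutions) shows that each of the four integral terms, and hence $\Phi x$, lies in $SAP_\omega(L^2(P,H))$.

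The heart of the argument is the contraction estimate, which is global in time precisely because $\mu<0$ makes both $\int_0^\infty\|S_\alpha(s)\|\,\mathrm{d}s\le CM\,\tfrac{|\mu|^{-1/\alpha}\pi}{\alpha\sin(\pi/\alpha)}$ and $\int_0^\infty\|S_\alpha(s)\|^2\,\mathrm{d}s\le (CM)^2\tfrac{|\mu|^{-1/\alpha}\pi}{2\alpha\sin(\pi/2\alpha)}$ finite (using $(1+|\mu|s^\alpha)^{-2}\le(1+|\mu|^2 s^{2\alpha})^{-1}$). For $x,y\in SAP_\omega(L^2(P,H))$ the free terms cancel, leaving four differences; I bound $\|(\Phi x)(t)-(\Phi y)(t)\|^2$ by four times the sum of their squared norms. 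The $f$-term is controlled by Cauchy--Schwarz with weight $\|S_\alpha(t-s)\|$ and the Lipschitz bound, which introduces the squared $L^1$-constant above. The $g$-term is handled by the It\^o isometry (using $\mathrm{Tr}\,Q<\infty$), replacing the stochastic integral by $\int_0^t\|S_\alpha(t-s)\|^2 E\|(\Delta g)Q^{1/2}\|^2\,\mathrm{d}s$ and then the Lipschitz bound, bringing in the $L^2$-constant; the $F$-term is identical via the isometry for the compensated measure $\tilde N$.

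The main obstacle is the non-compensated term driven by $G$ and $N$, because $N$ is not a martingale. Here I would write $N(\mathrm{d}s,\mathrm{d}u)=\tilde N(\mathrm{d}s,\mathrm{d}u)+\nu(\mathrm{d}u)\,\mathrm{d}s$ on $\{|u|_U\ge1\}$, splitting the term into a compensated stochastic integral, estimated by the Poisson isometry exactly as for $F$, and a drift integral. For the drift part I would apply Cauchy--Schwarz once in $u$ against $\nu$ (producing the total-mass factor $b=\int_{|u|_U\ge1}\nu(\mathrm{d}u)$) and once in $s$ against $\|S_\alpha(t-s)\|$, yielding the $b$-weighted squared $L^1$-factor. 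Assembling the four contributions and taking the supremum over $t\ge0$ gives $\|\Phi x-\Phi y\|_\infty\le k\,\|x-y\|_\infty$ with $k=2CM\{L(2\tfrac{|\mu|^{-1/\alpha}\pi}{\alpha\sin(\pi/\alpha)}+5\tfrac{|\mu|^{-2/\alpha}\pi}{2\alpha\sin(\pi/2\alpha)}+2b(\tfrac{|\mu|^{-1/\alpha}\pi}{\alpha\sin(\pi/\alpha)})^2)\}^{1/2}$, which is strictly less than $1$ by hypothesis. The Banach fixed-point theorem then furnishes a unique fixed point of $\Phi$ in $SAP_\omega(L^2(P,H))$, i.e. the unique square-mean S-asymptotically $\omega$-periodic mild solution of (\ref{1}).
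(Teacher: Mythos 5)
Your proposal is correct and takes essentially the same route as the paper's own proof: the same fixed-point operator on $SAP_{\omega}(L^{2}(P,H))$, the same appeal to Lemmas \ref{le1}--\ref{le4} (with the composition Lemmas \ref{l1} and \ref{l2}) for the self-mapping property, the same splitting of the non-compensated Poisson integral into its compensated part plus the $\nu$-drift with Cauchy--Schwarz producing the factor $b$, and the same contraction constant fed into the Banach principle. The only differences are cosmetic (you make the decay of $S_{\alpha}(t)c_{0}$ and the estimate $(1+|\mu|s^{\alpha})^{-2}\leq(1+|\mu|^{2}s^{2\alpha})^{-1}$ explicit, which the paper leaves implicit), so there is nothing substantive to flag.
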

\begin{proof}
Define an operator $\tilde{\Gamma}:SAP_{\omega}(L^{2}(P,H))\mapsto SAP_{\omega}(L^{2}(P,H))$
\begin{eqnarray*}
&&(\tilde{\Gamma} x)(t)\\
&&=S_{\alpha}(t)c_{0}+\int_{0}^{t}S_{\alpha}(t-s)f(s,x([s]),x(s))\mathrm{d}s+\int_{0}^{t}S_{\alpha}(t-s)g(s,x([s]),x(s))\mathrm{d}w(s)
\\&&~+\int_{0}^{t}\int_{|u|<1}S_{\alpha}(t-s)F(s,x(s^{-}),u)\tilde{N}(\mathrm{d}u,\mathrm{d}s)\\
&&~+\int_{0}^{t}\int_{|u|\geq1}S_{\alpha}(t-s)G(s,x(s^{-}),u)N(\mathrm{d}u,\mathrm{d}s)
\end{eqnarray*}
for every $x\in SAP_{\omega}(L^{2}(P,H)).$ By (H1) and (\ref{sem}), we get that the operator $\tilde{\Gamma}$ is well defined. By Lemma \ref{le1}, Lemma \ref{le2}, Lemma \ref{le3} and Lemma \ref{le4}, we get $(\tilde{\Gamma} x)(t)\in SAP_{\omega}(L^{2}(P,H)).$
For every $x,y\in SAP_{\omega}(L^{2}(P,H)),$ we have
\begin{eqnarray*}
&&\|(\tilde{\Gamma} x)(t)-(\tilde{\Gamma} y)(t)\|^{2}\\
&&=\|\int_{0}^{t}S_{\alpha}(t-s)\big(f(s,x([s]),x(s))-f(s,y([s]),y(s))\big)\mathrm{d}s\\
&&~+\int_{0}^{t}S_{\alpha}(t-s)\big(g(s,x([s]),x(s))-g(s,y([s]),y(s))\big)\mathrm{d}w(s)\\
&&~+\int_{0}^{t}\int_{|u|<1}S_{\alpha}(t-s)\big(F(s,x(s^{-}),u)-F(s,y(s^{-}),u)\big)\tilde{N}(\mathrm{d}u,\mathrm{d}s)\\
&&~+\int_{0}^{t}\int_{|u|<1}S_{\alpha}(t-s)\big(G(s,x(s^{-}),u)-G(s,y(s^{-}),u)\big)N(\mathrm{d}u,\mathrm{d}s)\|^{2}\\
&&\leq 4(CM)^{2}\frac{|\mu|^{-1/\alpha}\pi}{\alpha\sin(\pi/\alpha)}\int_{0}^{t}\frac{1}{1+|\mu|(t-s)^{\alpha}}\|[f(s,x([s]),x(s))-f(s,y([s]),y(s))]\|^{2}\mathrm{d}s\\
&&+4\int_{0}^{t}\frac{(CM)^{2}}{1+|\mu|^{2}(t-s)^{2\alpha}}E\|(g(s,x([s]),x(s))-g(s,y([s]),y(s)))Q^{1/2}\|_{\mathcal{L}(U,L^{2}(P,H))}^{2}\mathrm{d}s\\
&&+4(CM)^{2}L\int_{0}^{t}\int_{|u|_{U}<1}\frac{1}{1+|\mu|^{2}(t-s)^{2\alpha}}\|F(s,x(s^{-}),u)-F(s,y(s^{-}),u)\|^{2}\nu(\mathrm{d}u)\mathrm{d}s\\
&&+8\|\int_{0}^{t}\int_{|u|_{U}\geq1}S_{\alpha}(t-s)\big(G(s,x(s^{-}),u)-G(s,y(s^{-}),u)\big)\tilde{N}(\mathrm{d}u,\mathrm{d}s)\|^{2}\\
&&+8\|\int_{0}^{t}\int_{|u|_{U}\geq1}S_{\alpha}(t-s)\big(G(s,x(s^{-}),u)-G(s,y(s^{-}),u)\big)\nu(\mathrm{d}u)\mathrm{d}s\|^{2}\\
&&\leq 4(CM)^{2}\frac{|\mu|^{-1/\alpha}\pi}{\alpha\sin{\pi/\alpha}}\int_{0}^{t}\frac{1}{1+|\mu|(t-s)^{\alpha}}L(\|x([s])-y([s])\|^{2}+\|x(s)-y(s)\|^{2})\mathrm{d}s\\
&&~+4(CM)^{2}\int_{0}^{t}\frac{1}{1+|\mu|^{2}(t-s)^{2\alpha}}L(\|x([s])-y([s])\|^{2}+\|x(s)-y(s)\|^{2})\mathrm{d}s\\
&&~+4(CM)^{2}L\int_{0}^{t}\frac{1}{1+|\mu|^{2}(t-s)^{2\alpha}}\mathrm{d}s\sup_{s\in\mathbf{R}^{+}}\|x(s)-y(s)\|^{2}\\
&&~+8(CM)^{2}L\int_{0}^{t}\frac{1}{1+|\mu|^{2}(t-s)^{2\alpha}}\mathrm{d}s\sup_{s\in\mathbf{R}^{+}}\|x(s)-y(s)\|^{2}\\
&&~+8(CM)^{2}b\int_{0}^{t}\frac{1}{1+|\mu|(t-s)^{\alpha}}\mathrm{d}s\\&&~~\quad\times\int_{0}^{t}\int_{|u|\geq1}\frac{1}{1+|\mu|(t-s)^{\alpha}}\|G(s,x(s^{-}),u)-G(s,y(s^{-}),u)\|^{2}\nu(\mathrm{d}u)\mathrm{d}s
\\
&&\leq 4(CM)^{2}L\big(2\frac{|\mu|^{-1/\alpha}\pi}{\alpha\sin(\pi/\alpha)}+5\frac{|\mu|^{-2/\alpha}\pi}{2\alpha\sin(\pi/2\alpha)}+2b(\frac{|\mu|^{-1/\alpha}\pi}{\alpha\sin(\pi/\alpha)})^{2}\big)\sup_{s\in\mathbf{R}^{+}}\|x(s)-y(s)\|^{2}.
\end{eqnarray*}
 Since $2CM\{L\big(2\frac{|\mu|^{-1/\alpha}\pi}{\alpha\sin(\pi/\alpha)}+5\frac{|\mu|^{-2/\alpha}\pi}{2\alpha\sin(\pi/2\alpha)}+2b(\frac{|\mu|^{-1/\alpha}\pi}{\alpha\sin(\pi/\alpha)})^{2}\big)\}^{\frac{1}{2}}<1 ,$ we obtain the result by the Banach contraction mapping principle.
\end{proof}

\end{document}